\def\ps@pprintTitle{\let\@oddhead\@empty
  \let\@evenhead\@empty
  \def\@oddfoot{\reset@font\hfil\thepage\hfil}
  \let\@evenfoot\@oddfoot
}
\newtheorem{theorem}{Theorem}[section]
\newtheorem{lemma}[theorem]{Lemma}
\newtheorem{corollary}[theorem]{Corollary}
\newtheorem{problem}[theorem]{Problem}
\theoremstyle{definition}
\newtheorem{definition}[theorem]
{Definition}
\theoremstyle{remark}
\newtheorem{remark}[theorem]{\upshape\bfseries Remark}
\newtheorem{example}[theorem]{\upshape\bfseries Example}
\newcommand{\C}{\mathbb{C}}
\newcommand{\R}{\mathbb{R}}
\renewcommand{\H}{\mathbb{H}}
\newcommand{\Z}{\mathbb{Z}}
\newcommand*{\qi}{\mathbf{i}}
\newcommand*{\qj}{\mathbf{j}}
\newcommand*{\qk}{\mathbf{k}}
\newcommand*{\Cj}[1]{{#1}^\ast}
\begin{document}

\begin{frontmatter}

\title{Partial Fraction Decomposition\\ for Rational Pythagorean Hodograph Curves}

\author[1]{Hans-Peter Schröcker}
\ead{hans-peter.schroecker@uibk.ac.at}
\address[1]{Universität Innsbruck, Department of Basic Sciences in Engineering Sciences, Technikerstr.~13, 6020 Innsbruck, Austria}

\author[2]{Zbyněk Šír}
\ead{zbynek.sir@karlin.mff.cuni.cz}
\address[2]{Charles University, Faculty of Mathematics and Physic, Sokolovská 83, Prague 186 75, Czech Republic}

\begin{abstract}
    All rational parametric curves with prescribed polynomial tangent direction form a vector space. Via tangent directions with rational norm, this includes the important case of rational Pythagorean hodograph curves. We study vector subspaces defined by fixing the denominator polynomial and describe the construction of canonical bases for them. We also show (as an analogy to the fraction decomposition of rational functions) that any element of the vector space can be obtained as a finite sum of curves with single roots at the denominator. Our results give insight into the structure of these spaces, clarify the role of their polynomial and truly rational (non-polynomial) curves, and suggest applications to interpolation problems.
 \end{abstract}

\begin{keyword}
  Rational curve, polynomial curve, Pythagorean hodograph curve, partial fraction decomposition, Laurent series, canonical basis.
\end{keyword}

\end{frontmatter}

\section{Introduction}
\label{sec:introduction}

Polynomial and rational parametric curves are a traditional topic of Computer Aided Geometric Design and frequently appear in geometric modeling or numeric simulations. Some applications profit from rationality of the curve's unit tangent field, an observation that lead to the introduction of rational PH curves (``Pythagorean hodograph curves'') in \cite{farouki90c}. Ever since, PH curves have been closely investigated. Spatial PH curves were first considered in \cite{farouki94a} using a three polynomial preimage. A characterization of Pythagorean polynomial quadruples in terms of four polynomials was presented, in a different context, in \cite{Dietz}. Subsequently, two algebraic models for spatial PH curves were based on this characterization: The quaternion and Hopf map representations, as proposed in \cite{choi02b}. These forms are rotation invariant \cite{farouki02a} and serve as the foundation for many practical constructions of spatial PH curves and associated frames \cite{farouki08,faroukisurv}.

A vast majority of research papers focuses on \emph{polynomial} PH curves which have the advantage of a direct construction via integration of the hodograph. It was however noted in \cite{FaroukiSir2} that many properties of polynomial spatial PH curves can be transferred to the whole family of rational curves with the same tangent indicatrix. This family was first investigated in  \cite{FaroukiSir} where the envelope approach to planar rational PH curves of Pottmann and Farouki \cite{farouki96c} was generalized to rational PH space curves. While being rather straightforward and comprehensive, it seems that degree of the denominator polynomial and curve shape are difficult to control. The dual approach was continued in series of papers \cite{krajnc1,krajnc2,krajnc3} where the authors solve interpolation problems with rational spatial PH curves of low class. In \cite{FaroukiSakkalis2019} a special form of the rational hodograph is used to construct planar rational PH curves with rational arc-length function.

In the recent article \cite{kalkan22}, we have (together with co-authors) suggested a novel computation method for rational spatial PH curves. Motivated by the theory of motion polynomials in the dual quaternion model of space kinematics, c.f. \cite{husty12,hegedus13,li19} we identified rational PH curves with so called framing rational motions. Using suitable factorization we separated the spherical and translational part of the motion. The spherical component has been called Euler-Rodrigues motion or, equivalently referring to the images of a suitable orthogonal tripod, Euler-Rodrigues frame \cite{choi02,krajnc1}. Selecting appropriate parameters (the spherical motion component, represented by a quaternion polynomial $\mathcal{A} \in \H[t]$ and the PH curve's denominator polynomial $\alpha$) the  remaining coefficients can then be computed by solving a modestly sized and well-structured system of linear equations.

Here, we continue this approach. While \cite{kalkan22} was mostly concerned with the existence of truly rational (non-polynomial) solutions, we now investigate the complete structure of the solution spaces. While the formalism of motion polynomials yields a very natural and compact way of understanding this problem, it is not completely necessary and we abandoned it in this paper for two reasons. The first reason is to make our results better understandable for a wider audience. The second one is to make it obvious that our approach generalizes to more general cases of the tangent fields and also to higher dimensions. Our central result is Theorem~\ref{th:decomposition-multiple-roots} that, together with Theorems~\ref{th:dim-single-root}, \ref{th:canonical-decomposition}, and \ref{th:decomposition-curve}, allows us to construct canonical bases for solution spaces and also results in a rather surprising partial fraction decomposition for solution curves (Corollary~\ref{cor:PFD}).

We proceed with introducing some necessary basic information in Section~\ref{sec:preliminaries}. Our main results are presented in Section~\ref{sec:main-results}, at first for PH curves with a single root in the denominator (Section~\ref{sec:single-root}) and then for general denominators (Section~\ref{sec:general-denominators}). In order to make Section~\ref{sec:main-results} as clear as possible also several examples are included there. The corresponding technical proofs are collected in Section~\ref{sec:proofs} in order to allow for an uninterrupted flow of reading in Section~\ref{sec:main-results}. Our main tool of proof is a careful but rather technical study of an underlying system of linear equations in the special case of single root denominators. In concluding Section~\ref{sec:conclusion} we provide a more general understanding of our results as well as some possible directions for the future research.

\section{Preliminaries}
\label{sec:preliminaries}

In this section we will state the problem which we propose to solve. We will also place our approach in the context of existing constructions, in particular in the context of the theory of polynomial PH curves and of the two recent publications on rational PH curves \cite{FaroukiSir2,kalkan22}. Although the present paper is a continuation of the same subject, it is self-contained and can be read independently.

Let  us recall that skew field of quaternions $\H$ is the associative real algebra generated by the four basis elements $1$, $\qi$, $\qj$, $\qk$ together with the multiplication derived from the generating relations
\begin{equation*}
  \qi^2 = \qj^2 = \qk^2 = \qi\qj\qk = -1.
\end{equation*}
\emph{Conjugation} of a quaternion $\mathcal Q = q_0 + q_1 \qi + q_2 \qj + q_3 \qk \in \H$ is defined by changing signs of the coefficients at the complex units, i.e. $\Cj{\mathcal Q} = q_0 - q_1 \qi - q_2 \qj - q_3 \qk$, and the norm of $q$ is given by $\mathcal Q\Cj{\mathcal Q} = q_0^2 + q_1^2 + q_2^2 + q_3^2 \in \R$. Quaternion polynomials have been proven to be very useful in order to systematically construct \emph{polynomial} PH curves, see \cite{farouki08}. We propose to study the following general problem.

\begin{problem}
  \label{pr1}
  Given a quaternion valued polynomial $\mathcal{A}(t)$ determine the vector space of all the spatial rational curves $\mathbf{r}(t)$ having $\mathbf{F}(t) \coloneqq \mathcal{A}(t) \qi \Cj{\mathcal{A}}(t)$ for its tangent field, i.e. satisfying
  \begin{equation}
    \label{eq:1}
    \mathbf r'(t)\times {\mathbf F}(t)=0.
  \end{equation}
  Describe the structure of this solution space and, for ``interesting'' subspaces, construct canonical bases.
\end{problem}

As discussed more in detail in \cite{kalkan22}, the solutions to this problem for all input $\mathcal{A}(t)$ are precisely all the rational PH curves. In \cite{kalkan22} we were interested in criteria for the existence of rational (non-polynomial) solutions. In this article, we will provide a basis of the solution space and determine which basis elements are rational. Our approach is based on fixing not only $\mathcal{A}$ but also the denominator polynomial $\alpha$ and often also an upper bound for the degree of the numerator polynomial.

\begin{remark}
  \label{rem:F}
  It is possible to consider the polynomial ${\mathbf F}(t)$ as input to our problem. Since we will not make use of the PH property implied by ${\mathbf F}(t) = {\mathcal A}(t) \qi \Cj{\mathcal{A}}(t)$, all our results hold true for general ${\mathbf F}(t)$. The corresponding rational solution curves will have a prescribed polynomial direction field of tangents that not necessarily allows for a rational normalization.
\end{remark}

Note that rational tangent fields will give the same class of curves as the polynomial one. Indeed, multiplication by the common denominator does not change the tangent direction. Moreover, we can always assume that ${\mathbf F}(t)$ is free of polynomial factors. In the PH problem case this is equivalent to ${\mathcal A}(t)$ being free of polynomial factors and of right factors of the type $a(t)+\qi b(t)$ with real polynomials $a(t)$, $b(t)$, c.f.~\cite[Theorem 3.5, Definition 3.4]{kalkan22}. From now on we will thus assume that ${\mathbf F}(t)$ is free of real polynomial factors. We set $a \coloneqq \deg {\mathcal A}(t)$ whence the degree of ${\mathbf F}(t)$ equals~$2a$.

Polynomial PH curves are well studied in literature. In our context, they constitute an important building block of solution spaces that, fortunately, is easy to understand. We denote the vector space of polynomial solutions to \eqref{eq:1} of degree at most $M$ by $\mathcal{P}_{\mathcal A}^M$ and, for later reference, prove a simple and well-known lemma.

\begin{lemma}\label{lem:polPH}
  The space $\mathcal{P}_{{\mathcal A}}^M$ has dimension $M-2a+3$.
\end{lemma}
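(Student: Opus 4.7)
The plan is to reduce the vector cross-product condition $\mathbf{r}'\times\mathbf{F}=0$ to a scalar integration problem, and then count dimensions.

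First I would argue that $\mathbf{r}'\times\mathbf{F}=0$ forces $\mathbf{r}'(t)=p(t)\mathbf{F}(t)$ for some \emph{polynomial} $p(t)$. The parallelism condition already gives $\mathbf{r}'=q\mathbf{F}$ for a rational function $q=p_1/p_2$ with $\gcd(p_1,p_2)=1$. Clearing denominators yields $p_2\mathbf{r}'=p_1\mathbf{F}$, so $p_2$ divides every component of $p_1\mathbf{F}$ and hence, by coprimality, every component of $\mathbf{F}$. Since $\mathbf{F}$ was assumed free of real polynomial factors, $p_2$ is constant, so $q$ is a polynomial.

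Next I would translate the degree bound. If $\deg\mathbf{r}\le M$ then $\deg\mathbf{r}'\le M-1$, and since $\deg\mathbf{F}=2a$, we need $\deg p\le M-1-2a$. Conversely, any polynomial $p$ of degree at most $M-1-2a$ can be used: the vector polynomial $p(t)\mathbf{F}(t)$ has an antiderivative which is again a vector polynomial (integrate component-wise term by term), and adding an arbitrary constant vector $\mathbf{c}\in\R^3$ produces a solution $\mathbf{r}(t)$ of degree at most $M$. Thus the map
\begin{equation*}
  \{p\in\R[t]:\deg p\le M-1-2a\}\oplus\R^3\longrightarrow\mathcal{P}_{\mathcal A}^M,\qquad (p,\mathbf c)\mapsto \int p(t)\mathbf F(t)\,dt+\mathbf c
\end{equation*}
is a linear bijection (injectivity is immediate: $p$ is recovered from $\mathbf{r}'$ by the paragraph above, and $\mathbf{c}$ is then recovered as $\mathbf{r}(0)-\int_0^0 p\mathbf F$).

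Counting dimensions gives $(M-2a)+3=M-2a+3$ (assuming $M\ge 2a$; for smaller $M$ only constant curves survive, matching dimension $3$). The only subtle point is the first step—the passage from a pointwise parallelism condition to the existence of a \emph{polynomial} coefficient $p$—which crucially uses the standing hypothesis that $\mathbf{F}$ has no real polynomial common factor; everything else is a routine integration and dimension count.
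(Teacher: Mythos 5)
Your proof is correct and follows essentially the same route as the paper's: every element of $\mathcal{P}_{\mathcal A}^M$ is written as $\int p(t)\mathbf F(t)\,\mathrm{d}t$ plus a constant, and the dimension is the $M-2a$ coefficients of $p$ plus $3$ for the integration constant. You merely make explicit the step the paper leaves implicit, namely that the coefficient function $p$ is a polynomial because $\mathbf F$ has no real polynomial factor.
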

\begin{proof}
  Any element of $\mathcal{P}_{{\mathcal A}}^M$ has the form $\int \lambda(t){\mathbf F}(t)\,\mathrm{d}t$ where $\lambda$ is a real polynomial of degree at most $M-2a-1$. The free parameters are its $M-2a$ coefficients plus the vectorial constant of integration.
\end{proof}

The proof of above lemma demonstrates the straightforward approach to polynomial solution curves via integration. It is possible because polynomials are closed under integration --- a property not enjoyed by rational functions. So far, the predominant way of dealing with rational PH curves is the approach of \cite{FaroukiSir} that constructs rational PH curves as envelopes of their, likewise rational, sets of osculating planes. The simplest closed formula provided from this approach appeared in \cite{krajnc1, FaroukiSir2}. All the solutions to Problem~\ref{pr1} have the form
\begin{equation*}
  {\mathbf r}(t)=\frac{f(t)   \, \mathbf {u}'(t)\times \mathbf {u}''(t) +
    f'(t)  \, \mathbf {u}''(t)\times \mathbf {u}(t) +
    f''(t) \, \mathbf {u}(t)\times \mathbf {u}'(t)}
  {\det[\,\mathbf {u}(t),\mathbf {u}'(t),\mathbf {u}''(t)\,]},
\end{equation*}
where $\mathbf{u}(t)={\mathbf F}(t)\times {\mathbf F}'(t)$ and $f(t)$ is an arbitrary rational function. While this formula fully describes the system and results from some elegant geometric construction, it has several computational issues. Cusps of the curve occur in an uncontrollable way. The degree of the curve is difficult to control as well as the condition for its polynomiality. On the related subject, the denominator is a square
\begin{equation}
  \label{eq:2}
  \det[\,\mathbf {u}(t),\mathbf {u}'(t),\mathbf {u}''(t)\,]=\det[\,{\mathbf F}(t),{\mathbf F}'(t),{\mathbf F}''(t)\,]^2.
\end{equation}
One can in principle try to set the function $f$ so that one occurrence of the determinant $\det[\,\mathbf{F}(t),\mathbf{F}'(t),\mathbf{F}''(t)\,]$ is canceled. In all attempts however, also the second occurrence of the same factor was canceled. So it seems that the factors of the denominator of \eqref{eq:2} can be either of power three (coming from the denominator of $f''$) or of degree two coming from $\det[\,\mathbf{u}(t),\mathbf{u}'(t),\mathbf{u}''(t)\,]$ with generally no possibility of simple cancellation. This behavior, quite mysteriously appearing in \cite{FaroukiSir2}, will be to a great extent elucidated in the present paper.

From now on we usually omit the argument $t$ when writing polynomials. The following lemma summarizes the approach of \cite{kalkan22} and reduces the solution of Problem \ref{pr1} to the solution of a system of linear equations.

\begin{lemma}
  \label{lem:2}
  Consider a rational curve with parametric equation
  \begin{equation}
    \label{eq:3}
    \mathbf r = \frac{-2{\mathbf b}}{\alpha}
  \end{equation}
  where ${\mathbf b} = \sum_{i=0}^N {\mathbf b}_it^i$ is a vector valued polynomial (its coefficients are elements of $\R^3$) and $\alpha \in \R[t]$ is a real polynomial. Then $\mathbf{r}$ is a solution to Problem~\ref{pr1} if and only if
  \begin{equation}
    \label{eq:4}
    \alpha' {\mathbf b} - \alpha {\mathbf b}' = \mu \mathbf F
  \end{equation}
  for some $\mu \in \R[t]$.
\end{lemma}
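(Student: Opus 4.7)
The plan is a direct computation followed by an algebraic observation about parallel polynomial vector fields. First, I would differentiate the parametrization \eqref{eq:3} to obtain
\begin{equation*}
    \mathbf{r}' = \frac{2(\alpha'\mathbf{b} - \alpha\mathbf{b}')}{\alpha^2},
\end{equation*}
so that the tangency condition \eqref{eq:1} is equivalent, after clearing the scalar factor $2/\alpha^2$, to the polynomial identity
\begin{equation*}
    (\alpha'\mathbf{b} - \alpha\mathbf{b}') \times \mathbf{F} = 0.
\end{equation*}

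The implication \eqref{eq:4} $\Rightarrow$ \eqref{eq:1} is then immediate: if $\alpha'\mathbf{b} - \alpha\mathbf{b}' = \mu\mathbf{F}$, then the cross product with $\mathbf{F}$ vanishes, so $\mathbf{r}' \times \mathbf{F} = 0$.

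For the converse, the vanishing cross product says that the polynomial vector $\mathbf{v} \coloneqq \alpha'\mathbf{b} - \alpha\mathbf{b}'$ is proportional to $\mathbf{F}$ at every $t$ where $\mathbf{F}(t) \neq 0$. Hence there exists a rational scalar function $\mu$ with $\mathbf{v} = \mu\mathbf{F}$. The essential point — and the only thing that requires any care — is to argue that $\mu$ is in fact a \emph{polynomial}. Writing $\mu = p/q$ in lowest terms and comparing components gives $q \cdot v_i = p \cdot F_i$ for $i=1,2,3$; since $\gcd(p,q)=1$, the polynomial $q$ must divide every component $F_i$ of $\mathbf{F}$. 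But we have explicitly assumed that $\mathbf{F}$ is free of real polynomial factors, so $q$ is a (nonzero) constant and $\mu \in \R[t]$. This yields \eqref{eq:4}.

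The only mildly subtle step is this last polynomiality argument, which crucially depends on the standing assumption that $\mathbf{F}$ carries no common polynomial factor; everything else reduces to an application of the quotient rule and the elementary fact that parallel polynomial vectors differ by a scalar factor.
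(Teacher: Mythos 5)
Your proof is correct and follows essentially the same route the paper takes (by reference to \cite[Theorem 3.6]{kalkan22}): identifying $\alpha'\mathbf{b}-\alpha\mathbf{b}'$ as a numerator of $\mathbf{r}'$ and deducing polynomiality of $\mu$ from the standing assumption that $\mathbf{F}$ has no real polynomial factors. You have merely written out the details that the paper delegates to the citation.
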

\begin{proof}
  A detailed proof can be found in \cite[Theorem 3.6]{kalkan22}. It is based on the fact that $(\alpha' {\mathbf b} - \alpha {\mathbf b}')$ is a numerator of $\mathbf r'$. Also $\mu$ can be considered polynomial (and not rational) because $\mathbf F$ has no real factors.
\end{proof}

The constant $-2$ in \eqref{eq:3} is an artifact of the proof of Lemma~\ref{lem:2} in \cite{kalkan22} via dual quaternion polynomials. For the sake of consistency with \cite{kalkan22} we will not omit it, even if this would well be possible. Note that the representation \eqref{eq:3} is only unique up to multiplication of both, ${\mathbf b}$ and $\alpha$, with real polynomials and up to rational re-parametrizations. The former ambiguity is important for us in this paper.

With $\mathbf F$, $\alpha$, $\deg {\mathbf b}$, and $\deg \mu$ prescribed, expanding and comparing coefficients in $t$, Equation~\eqref{eq:4} leads to a system of homogeneous linear equations for computing the coefficients of ${\mathbf b}$ and $\mu$. Let us briefly discuss the relevance and meaning of the different polynomials involved in~\eqref{eq:4}:
\begin{itemize}
\item The solutions for ${\mathbf b}$ and the corresponding solution curves \eqref{eq:3} form a vector space. A careful discussion of the solution space for ${\mathbf b}$ will be an important tool of proof but we are actually interested in the rational curve $\mathbf r$. We will therefore formulate results in terms of $\mathbf r$ (Section~\ref{sec:main-results}) while ${\mathbf b}$ will appear in technical proofs (Section~\ref{sec:proofs}).
\item We often refer to $\alpha$ as the \emph{denominator polynomial.} This includes the possibility that \eqref{eq:3} is not reduced, that is, the resulting rational curve $\mathbf r$ may have representations with denominators of lower degree. Even polynomial solution curves $\mathbf r$ are possible and will occur if $\alpha$ is a factor of~${\mathbf b}$.
\item The polynomials $\mathbf F$ and $\alpha$ are considered as input to our problem. The polynomials $\mathcal A$ and $\mathbf F = \mathcal A \qi \Cj{\mathcal A}$ will be considered as fixed throughout the text while we will allow for different values of the denominator polynomial $\alpha$, sometimes even within the same lemma, theorem, or example.
\item The polynomial $\mu$ is not of direct interest to us. We are rather interested in ${\mathbf b}$ and, ultimately, in $\mathbf r$. However, the relevance of $\mu$ should not be underestimated. It directly influences $\mathbf r$ and, together with $\mathcal A$, encodes the parametric speed of the resulting curve. Therefore, it plays an important role, for example in interpolation problems with rational PH curves.
\end{itemize}

In Section~\ref{sec:single-root} we will derive several results pertaining to rational PH curves with a single root $\beta$ in the denominator, that is $\alpha = (t-\beta)^n$ for some $n \in \mathbb{N}$. These equally apply to real or complex values of $\beta$. The reasons for admitting complex roots is Section~\ref{sec:general-denominators} where we formulate results for arbitrary denominator polynomials $\alpha$ in terms of its complete factorization over~$\C$.

Throughout this paper, we will often make a genericity assumptions on the input data:

\begin{definition}
  \label{def:generic}
  The data $\mathcal A \in \H[t]$, $\alpha \in \R[t]$ is called \emph{generic} if any three consecutive coefficients ${\mathbf f}_{i-1}$, ${\mathbf f}_i$, ${\mathbf f}_{i+1}$ in the Taylor expansion $\mathbf{F} = \mathcal A \qi \Cj{\mathcal A} = \sum_{i=0}^{2a} {\mathbf f}_i (t-\beta)^i$ at every root $\beta \in \C$ of $\alpha$ are linearly independent. Otherwise the data are called \emph{non-generic}.
\end{definition}

By the central result of \cite{kalkan22}, the linear independence of ${\mathbf f}_{0}$, ${\mathbf f}_1$, ${\mathbf f}_{2}$ for all the roots of $\alpha$ implies the existence of non-polynomial solutions having a root of multiplicity at least three. In case of non-generic input data, variants of most of our results still hold true. We will occasionally hint at these possibilities.

\section{Main Results}
\label{sec:main-results}

We will describe the complete space of rational and polynomial PH curves with a prescribed tangent indicatrix. First we will analyse in detail the case where the denominator $\alpha$ has only one (possibly multiple) root $\beta$. Then we provide decomposition results in order to handle general denominators. Proofs of lemmas in this section are collected in Section~\ref{sec:proofs}.

\subsection{Denominators with a Single Root}
\label{sec:single-root}

Let us suppose through this subsection that the denominator $\alpha$ is of the shape $\alpha = (t-\beta)^n$ for some $\beta \in \C$ and $n \in \mathbb{N}$. The main reason to consider the complex number field is to be able to later handle irreducible (over $\mathbb R$) quadratic factors. The choice of a non-real $\beta$ does not lead to any fundamental difference. More precisely the solutions of the following system of equations will be real for $\beta$ real and complex for $\beta$ complex.

In accordance with the previous notation we denote by $\mathcal{R}_{{\mathcal A},\beta}$ the linear space of all rational PH curves with the tangent indicatrix $\mathcal A\qi\Cj{\mathcal A}/(\mathcal A\Cj{\mathcal A})$ which have at most one root $\beta$ at the denominator. Any element $\mathbf{r} \in \mathcal{R}_{\mathcal A,\beta}$ has a unique Laurent expansion \cite[Section~8.1]{howie03}
\begin{equation}
  \label{eq:5}
  {\mathbf r}= \sum_{i=-\infty}^\infty {\mathbf r}_i (t-\beta)^i,
\end{equation}
with only finitely many nonzero coefficients ${\mathbf r}_i \in \R^3$. For any $m$, $M \in \Z$ with $m \le M$ we denote by $\mathcal{R}_{{\mathcal A},\beta}^{m,M}$ the linear subspace of PH curves whose Laurent series representation \eqref{eq:5} satisfies ${\mathbf r}_i = 0$ if $i < m$ or if $i > M$.

Clearly these spaces are nested for decreasing $m$ or increasing $M$. Note also, that many of these spaces may be trivial (consisting only of the zero constant curve). For $m\ge 0$ the space $\mathcal{R}_{{\mathcal A},\beta}^{m,M}$ contains only polynomial curves and does not depend on $\beta$. In particular the space $\mathcal{R}_{{\mathcal A},\beta}^{0,0}$ has dimension $3$ and contains only the constant curves, that is $\mathcal{R}_{{\mathcal A},\beta}^{0,0} = \R^3$. This very simple space is important for understanding the structure of all spaces $\mathcal{R}_{{\mathcal A},\beta}^{m,M}$. As we shall see later, the value $M=0$ is the only exception to the formula $\dim \mathcal{R}_{{\mathcal A},\beta}^{m,M} \le \dim \mathcal{R}_{{\mathcal A},\beta}^{m,M-1} + 1$. For this reason we introduce a curve normalization defining
\begin{equation}
  \label{eq:6}
  {\mathcal Q}_{{\mathcal A},\beta} \coloneqq \{\mathbf{r} \in \mathcal{R}_{{\mathcal A},\beta} \mid {\mathbf r}_0 = 0\}
\end{equation}
and its Laurent cuts ${\mathcal Q}_{{\mathcal A},\beta}^{m,M} \coloneqq \{\mathbf{r} \in \mathcal{R}_{{\mathcal A},\beta}^{m,M} \mid {\mathbf r}_0 = 0\}$. This simply removes the translational degree of freedom from solution curves and allows us to formulate

\begin{lemma}
  \label{lem:M0-existence}
  Given $m \in \Z$, there is a minimal $M_0(m) \ge m$ such that $\dim {\mathcal Q}_{{\mathcal A},\beta}^{m,M_0(m)} = 1$ and $\dim {\mathcal Q}_{{\mathcal A},\beta}^{m,M} = 0$ for all $M < M_0(m)$.
\end{lemma}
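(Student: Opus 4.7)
The plan is to show, for each fixed $m$, that the nested chain $\mathcal Q^{m,m}_{{\mathcal A},\beta} \subseteq \mathcal Q^{m,m+1}_{{\mathcal A},\beta} \subseteq \cdots$ eventually becomes nontrivial as $M$ grows, while for any $M \neq 0$ its dimension can increase by at most one when $M$ is incremented. The first value at which the dimension jumps from $0$ is then automatically one-dimensional, and this is $M_0(m)$.

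For nontriviality I would invoke Lemma~\ref{lem:polPH}: the polynomial PH space $\mathcal P_{\mathcal A}^M$ has dimension $M - 2a + 3$ and so grows without bound. Imposing that the Taylor coefficients at $\beta$ of orders $0,1,\ldots,\max(m-1,0)$ vanish is a bounded number of linear conditions, so for $M$ sufficiently large a nonzero polynomial solution lies in $\mathcal Q^{m,M}_{{\mathcal A},\beta}$. Together with the monotonicity $\mathcal Q^{m,M}_{{\mathcal A},\beta} \subseteq \mathcal Q^{m,M+1}_{{\mathcal A},\beta}$, this makes $M_0(m) \coloneqq \min\{M \ge m : \dim \mathcal Q^{m,M}_{{\mathcal A},\beta} \ge 1\}$ well defined, and $\dim \mathcal Q^{m,M}_{{\mathcal A},\beta} = 0$ for all $m \le M < M_0(m)$ then holds by minimality.

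The heart of the argument — and the step I expect to be the main obstacle — is the upper bound $\dim \mathcal Q^{m,M_0(m)}_{{\mathcal A},\beta} \le 1$. I would study the evaluation map $\pi \colon \mathcal Q^{m,M_0(m)}_{{\mathcal A},\beta} \to \R^3$, $\mathbf{r} \mapsto \mathbf{r}_{M_0(m)}$. Its kernel sits inside a Laurent cut with strictly smaller upper index and so is trivial by minimality of $M_0(m)$; thus it suffices to bound $\dim \operatorname{im} \pi$. To do so I would expand the PH condition $\mathbf{r}' \times \mathbf{F} = 0$ as a Laurent series in $s = t - \beta$, using $\mathbf{F} = \sum_{j=0}^{2a} \mathbf{f}_j s^j$. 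Its top coefficient, the one at $s^{M_0(m) + 2a - 1}$, collapses to the single term $M_0(m)\,\mathbf{r}_{M_0(m)} \times \mathbf{f}_{2a}$. Since $\mathbf{f}_{2a} \neq 0$ (because $\deg \mathbf{F} = 2a$ is achieved at every $\beta$), its vanishing forces $\mathbf{r}_{M_0(m)} \in \operatorname{span}(\mathbf{f}_{2a})$ whenever $M_0(m) \neq 0$, so $\operatorname{im} \pi$ is at most one-dimensional.

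The remaining point is that $M_0(m) = 0$ cannot occur: the normalization $\mathbf{r}_0 = 0$ built into $\mathcal Q$ forces $\mathcal Q^{m,0}_{{\mathcal A},\beta} = \mathcal Q^{m,-1}_{{\mathcal A},\beta}$, so a first jump at $M = 0$ is impossible — precisely the $M = 0$ exception flagged in the paragraph preceding the lemma. Modulo the routine verification of the edge cases $M_0(m) = m$ (where the previous space is empty) and $M_0(m) = 1$ (where the kernel of $\pi$ passes through the forbidden index $0$), all pieces combine to give $\dim \mathcal Q^{m,M_0(m)}_{{\mathcal A},\beta} = 1$ and complete the proof.
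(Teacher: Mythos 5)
Your argument is correct, but it takes a genuinely different route from the paper. The paper proves Lemmas~\ref{lem:M0-existence}--\ref{lem:lowCoef} in one sweep by translating membership in ${\mathcal Q}_{{\mathcal A},\beta}^{m,M}$ into the linear system \eqref{eq:linear-system} for the numerator coefficients ${\mathbf b}_i$ and the auxiliary polynomial $\mu$, and then reading off the answer from the block structure \eqref{eq:linear-system-A}--\eqref{eq:linear-system-C} and the critical $(2n)$-th equation: the free parameters are the $\mu_j$ in \eqref{freeMU}, and a single free $\mu_{n-1}$ survives at the threshold. You instead work directly with the defining identity $\mathbf r'\times\mathbf F=0$, combining (i) unbounded growth of $\dim\mathcal P_{\mathcal A}^M=M-2a+3$ to guarantee that the nested chain eventually becomes nontrivial, and (ii) the top Laurent coefficient $M_0(m)\,{\mathbf r}_{M_0(m)}\times{\mathbf f}_{2a}=0$ to show the evaluation map $\pi$ has image in $\operatorname{span}({\mathbf f}_{2a})$ and trivial kernel, so the dimension at the first jump is exactly one; your exclusion of a jump at $M=0$ via the normalization ${\mathbf r}_0=0$ matches the paper's motivation for introducing ${\mathcal Q}_{{\mathcal A},\beta}$. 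Your ``highest coefficient is a multiple of ${\mathbf f}_{2a}$'' observation is a pleasing mirror image of the paper's Lemma~\ref{lem:lowCoef} (lowest coefficient is a multiple of ${\mathbf f}_0={\mathbf F}(\beta)$), and your proof is more elementary and needs no genericity assumption. What it buys less of: the paper's system-based analysis simultaneously delivers the explicit value of $M_0(m)$ in Lemma~\ref{lem:M0}, the normalization of Lemma~\ref{lem:qbasis}, and the non-generic variants, none of which fall out of your argument; so for the paper's purposes the heavier machinery is not wasted. The edge cases you defer ($M_0(m)=m$, $m=0$, and the kernel passing through the forbidden index $0$) are indeed routine and do not hide a gap.
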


For the generic case (c.f. Definition~\ref{def:generic}), the next lemma gives the precise value of $M_0(m)$. For non-generic cases, similar formulas can be derived.

\begin{lemma}
  \label{lem:M0}
  For generic $\mathcal{A}$, $\beta$ we have
  \begin{equation}
    \label{eq:7}
    M_0(m) =
    \begin{cases}
      2a + m     & \text{if $m < -2a$,}          \\
      2a + m + 3 & \text{if $-2 a \le m \le -2$,} \\
      2a + m + 2 & \text{if $m = -1$,}           \\
      2a + m + 1 & \text{if $m = 0$,}            \\
      2a + m     & \text{if $m \ge 1$.}
    \end{cases}
  \end{equation}
\end{lemma}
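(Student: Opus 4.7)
The plan is to reduce the question to linear algebra on Laurent coefficients at $\beta$. Expanding $\mathbf{r}=\sum_i\mathbf{r}_i(t-\beta)^i$ and $\mathbf{F}=\sum_{j=0}^{2a}\mathbf{f}_j(t-\beta)^j$, the PH condition combined with $\mathbf{f}_0\neq\mathbf{0}$ (genericity) provides a scalar Laurent series $\lambda=\sum_k\lambda_k(t-\beta)^k$ of finite support such that $\mathbf{r}'=\lambda\,\mathbf{F}$; this is simply $\lambda=2\mu/\alpha^2$ from Lemma~\ref{lem:2}. Equating coefficients of $(t-\beta)^\ell$ produces
\[
(\ell+1)\,\mathbf{r}_{\ell+1}=\sum_{j=0}^{2a}\lambda_{\ell-j}\,\mathbf{f}_j.
\]
Belonging to $\mathcal{Q}_{\mathcal{A},\beta}^{m,M}$ then translates into three requirements on $\lambda$: its Laurent support lies in $[m-1,K_0]$ for some $K_0\leq M-1-2a$ (since $\mathbf{f}_{2a}\neq\mathbf{0}$ forbids cancellation at the top); the residue equation $\sum_{j\ge0}\lambda_{-1-j}\,\mathbf{f}_j=\mathbf{0}$ holds, so that integrating $\lambda\mathbf{F}$ gives a rational (not logarithmic) curve; and the normalization $\mathbf{r}_0=\mathbf{0}$ pins down the integration constant.

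I would then count admissible $\lambda$ case by case. For $m\geq1$ no residue issue arises: $\lambda$ is polynomial of degree $\leq M-2a-1$ vanishing to order $\geq m-1$ at $\beta$, so the space has dimension $\max(0,M-2a-m+1)$ and $M_0(m)=2a+m$. For $m=0$, $\lambda$ is any polynomial of degree $\leq M-2a-1$, yielding $M_0(0)=2a+1$. For $m\in\{-1,-2\}$ the residue matrix $[\mathbf{f}_0\,|\,\cdots\,|\,\mathbf{f}_{-m}]$ has at most three linearly independent columns by genericity, so its kernel is trivial; the negative part of $\lambda$ collapses, the problem reduces to $m=0$, and $M_0(m)=2a+1$, matching the stated formula. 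For $m<-2a$ the residue equation becomes automatic (since $\mathbf{f}_j=\mathbf{0}$ for $j>2a$), and the single monomial $\lambda=\lambda_{m-1}(t-\beta)^{m-1}$ already produces a nontrivial curve with support $[m,m+2a]$, giving $M_0(m)=2a+m$.

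The heart of the lemma, and the main technical obstacle, is the intermediate range $-2a\leq m\leq-3$. The full residue matrix $[\mathbf{f}_0\,|\,\cdots\,|\,\mathbf{f}_{-m}]$ is $3\times(1-m)$ of rank $3$ with a $(-m-2)$-dimensional kernel, so simply taking $K_0=-1$ would generally overshoot a one-dimensional $\mathcal{Q}_{\mathcal{A},\beta}^{m,M}$. Instead I would shrink the upper support of $\lambda$ by setting $K_0=-k$ for $k\geq1$; this truncates the residue matrix to $[\mathbf{f}_{k-1}\,|\,\cdots\,|\,\mathbf{f}_{-m}]$, still of rank $3$ by genericity, now with kernel of dimension $-m-k-1$. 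Requiring this to equal $1$ gives $k=-m-2$ and hence $M_0(m)=K_0+2a+1=2a+m+3$; any smaller $M$ forces a larger $k$ and kills the kernel. This truncation argument relies crucially on the linear independence of any three consecutive Taylor coefficients $\mathbf{f}_{j-1},\mathbf{f}_j,\mathbf{f}_{j+1}$ — precisely the generic hypothesis of Definition~\ref{def:generic} — and it is what delivers the non-obvious value $2a+m+3$ in this range.
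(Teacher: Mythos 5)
Your proof is correct and follows essentially the same route as the paper: your residue equation $\sum_{j}\lambda_{-1-j}\mathbf{f}_j=\mathbf{0}$ is exactly the paper's ``critical equation'' (the $2n$-th equation of its linear system, rewritten via $\lambda=2\mu/\alpha^2$), and in both arguments the value $2a+m+3$ comes from the fact that, by genericity, four consecutive coefficients $\mathbf{f}_{k-1},\dots,\mathbf{f}_{k+2}$ are needed before that single vector condition acquires a one-dimensional kernel. Your packaging via the support of $\lambda$ and the residue obstruction to integrating $\lambda\mathbf{F}$ is a clean equivalent of the paper's block analysis of the $(\mathbf{b},\mu)$ system, including the same treatment of the boundary cases $m\in\{-2,-1,0\}$, $m\ge 1$, and $m<-2a$.
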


In order to obtain a better idea about the behavior of $M_0(m)$ let us formulate

\begin{corollary}
  For generic $\mathcal{A}$, $\beta$ we have
  \begin{equation*}
    M_0(-2) = M_0(-1) = M_0(0) = M_0(1) = 2a + 1.
  \end{equation*}
  The restriction of $M_0$ to the set $\mathbb Z \setminus \{-2,-1,0\}$ is strictly increasing (Figure~\ref{fig:M0}).
\end{corollary}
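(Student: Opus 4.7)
The corollary is a direct consequence of Lemma~\ref{lem:M0}, so the plan is essentially a case check against the piecewise formula~\eqref{eq:7}.

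For the four equalities, I would substitute each of $m=-2,-1,0,1$ into \eqref{eq:7}. Assuming $a\ge 1$ (which is the non-trivial situation, and is implicit in the genericity hypothesis since linear independence of three vectors $\mathbf{f}_0,\mathbf{f}_1,\mathbf{f}_2$ requires $\deg\mathbf{F}=2a\ge 2$), the inequality $-2a\le -2$ holds, so $m=-2$ and $m=-1$ fall into the second, respectively third, case of~\eqref{eq:7}, while $m=0$ falls into the fourth and $m=1$ into the fifth. Computing $2a-2+3=2a-1+2=2a+0+1=2a+1+0=2a+1$ gives the asserted common value.

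For the monotonicity statement, I would verify strict increase by comparing consecutive elements of $\mathbb{Z}\setminus\{-2,-1,0\}$. Within each of the five pieces of~\eqref{eq:7} the formula has the form $M_0(m)=2a+m+c$ with $c$ constant, so $M_0(m+1)-M_0(m)=1>0$ whenever both $m$ and $m+1$ lie in the same piece and neither is excluded. Only three types of transition remain: (i) crossing $m=-2a$ (relevant only for $a\ge 2$, where both $-2a-1$ and $-2a$ lie in the domain), giving $M_0(-2a)-M_0(-2a-1)=3-(-1)=4$; (ii) the jump across the excluded set from $m=-3$ to $m=1$, where for $a\ge 2$ one has $M_0(-3)=2a$ and $M_0(1)=2a+1$, whereas for $a=1$ one has $M_0(-3)=-1$ and $M_0(1)=3$, in both cases a strict increase; and (iii) the transition from $m=1$ to $m=2$, both inside the last piece, with difference~$1$.

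There is no real obstacle; the only mild subtlety is organizing the case $a=1$, in which the piece $-2a\le m\le -2$ degenerates to the single (excluded) point $m=-2$, so consecutive elements of the restricted domain straddling $m=-2$ belong to the first and fourth pieces of~\eqref{eq:7}. A single computation as above handles this situation, completing the proof.
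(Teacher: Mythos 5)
Your proposal is correct and matches the paper's (implicit) approach: the paper states this corollary without proof as an immediate consequence of Lemma~\ref{lem:M0}, and your case-by-case substitution into the piecewise formula~\eqref{eq:7}, including the careful handling of the degenerate case $a=1$ and the observation that genericity forces $a\ge 1$, is exactly the intended verification.
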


\begin{figure}
  \centering
\includegraphics{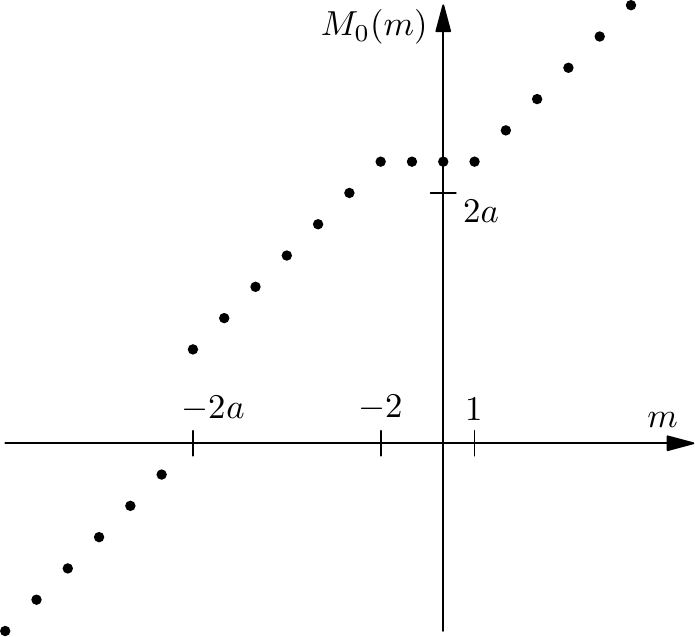}
  \caption{The function $M_0(m)$ for $a = 4$.}
  \label{fig:M0}
\end{figure}

\begin{lemma}
  \label{lem:qbasis}
  For generic $\mathcal{A}$, $\beta$ we have
  \begin{equation*}
    {\mathcal Q}_{{\mathcal A},\beta}^{-2,2a+1} = {\mathcal Q}_{{\mathcal A},\beta}^{-1,2a+1} =
    {\mathcal Q}_{{\mathcal A},\beta}^{0,2a+1} = {\mathcal Q}_{{\mathcal A},\beta}^{1,2a+1}.
  \end{equation*}
  Consequently, there are no normalized PH curves whose Laurent cut starts with a non-zero coefficient at the power $-2$, $-1$, or $0$. On the other hand, for $m\in \mathbb Z \setminus \{-2,-1,0\}$ there is a vector ${\mathbf q}_{{\mathcal A},\beta}^{m}\in {\mathcal Q}_{{\mathcal A},\beta}^{m,M_0(m)}$ with the non-zero lowest coefficient ${\mathbf r}_m={\mathbf F}(\beta)={\mathcal A}(\beta)\qi\Cj{\mathcal{A}}(\beta)$.
\end{lemma}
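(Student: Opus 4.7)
The plan is to handle the three assertions of the lemma in turn, combining the dimension formula of Lemma~\ref{lem:M0} with a coefficient-wise expansion of equation~\eqref{eq:4} at $\beta$.

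The chain of four equalities is immediate from the inclusions
\[
\mathcal{Q}_{\mathcal{A},\beta}^{1,2a+1}
\subseteq \mathcal{Q}_{\mathcal{A},\beta}^{0,2a+1}
\subseteq \mathcal{Q}_{\mathcal{A},\beta}^{-1,2a+1}
\subseteq \mathcal{Q}_{\mathcal{A},\beta}^{-2,2a+1},
\]
which hold because decreasing the lower index $m$ merely relaxes the vanishing condition $\mathbf{r}_i=0$ for $i<m$. By Lemma~\ref{lem:M0} we have $M_0(-2)=M_0(-1)=M_0(0)=M_0(1)=2a+1$, so Lemma~\ref{lem:M0-existence} forces each of these four spaces to be one-dimensional; a chain of inclusions between one-dimensional subspaces must collapse to equalities. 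The non-existence statement then follows as a corollary: the unique generator (up to scaling) common to all four spaces already lies in $\mathcal{Q}_{\mathcal{A},\beta}^{1,2a+1}$, so its lowest non-vanishing Laurent coefficient sits at position $\geq 1$ and no element has a non-zero coefficient at position $-2$, $-1$, or $0$.

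For the construction of $\mathbf{q}_{\mathcal{A},\beta}^{m}$ with $m\in\Z\setminus\{-2,-1,0\}$, I would expand equation~\eqref{eq:4} locally at $\beta$. With $\alpha=(t-\beta)^n$ and Taylor expansions $\mathbf{b}=\sum_j\mathbf{b}_j(t-\beta)^j$, $\mu=\sum_k\mu_k(t-\beta)^k$, $\mathbf{F}=\sum_i\mathbf{f}_i(t-\beta)^i$, comparing coefficients of $(t-\beta)^s$ yields
\[
(2n-s-1)\,\mathbf{b}_{s-n+1} \;=\; \sum_{k+i=s}\mu_k\,\mathbf{f}_i\qquad\text{for all }s\in\Z.
\]
If $m$ is the lowest non-zero Laurent index of $\mathbf{r}=-2\mathbf{b}/(t-\beta)^n$, then $\mathbf{b}_j=0$ for $j<m+n$ and the lowest non-trivial order $s=m+2n-1$ reduces to $-m\,\mathbf{b}_{m+n}=\mu_{m+2n-1}\,\mathbf{F}(\beta)$. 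For $m\neq 0$ this forces $\mathbf{r}_m=-2\mathbf{b}_{m+n}$ to be a scalar multiple of $\mathbf{F}(\beta)=\mathcal{A}(\beta)\,\qi\,\Cj{\mathcal{A}}(\beta)$. The non-vanishing of $\mathbf{r}_m$ follows from the strict monotonicity of $M_0$ outside $\{-2,-1,0\}$: otherwise the generator of $\mathcal{Q}_{\mathcal{A},\beta}^{m,M_0(m)}$ would lie in $\mathcal{Q}_{\mathcal{A},\beta}^{m+1,M_0(m)}$, which is trivial because Lemma~\ref{lem:M0} gives $M_0(m+1)>M_0(m)$ for every such $m$. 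A final rescaling produces $\mathbf{q}_{\mathcal{A},\beta}^{m}$ with $\mathbf{r}_m=\mathbf{F}(\beta)$ exactly.

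The main obstacle I anticipate is upgrading the non-existence claim from the truncation $\mathcal{Q}_{\mathcal{A},\beta}^{-2,2a+1}$ to arbitrary normalized PH curves in $\mathcal{Q}_{\mathcal{A},\beta}$: without an a priori bound on the maximal Laurent index, one must rule out that the additional degrees of freedom appearing for $M>2a+1$ ever contribute a non-zero coefficient at position $-1$ or $-2$. I would address this by an induction on $M$ establishing $\dim\mathcal{Q}_{\mathcal{A},\beta}^{-1,M}=\dim\mathcal{Q}_{\mathcal{A},\beta}^{1,M}$ for all $M\geq 2a+1$, propagating the base case via the observation that each dimension jump corresponds to a polynomial PH generator with only non-negative Laurent indices; alternatively, one can invoke the canonical decomposition results to be proved in Section~\ref{sec:general-denominators}.
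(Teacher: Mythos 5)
Your argument for the chain of equalities is correct and pleasingly economical: since $M_0(-2)=M_0(-1)=M_0(0)=M_0(1)=2a+1$ by Lemma~\ref{lem:M0}, Lemma~\ref{lem:M0-existence} makes all four spaces one-dimensional, and nested one-dimensional spaces must coincide. Your treatment of the last assertion is also sound and differs from the paper in an interesting way: the paper exhibits the generator explicitly by choosing the free parameter $\mu_{n-1}=-n$ in the linear system (giving $\mathbf{b}_0=\mathbf{f}_0$ directly), whereas you deduce $\mathbf{r}_m\neq 0$ abstractly from the strict monotonicity of $M_0$ outside $\{-2,-1,0\}$ and then read off the direction of $\mathbf{r}_m$ from the lowest non-trivial coefficient equation. (Do note that collapsing $\sum_{k+i=m+2n-1}\mu_k\mathbf{f}_i$ to the single term $\mu_{m+2n-1}\mathbf{f}_0$ first requires running through the equations of order $s<m+2n-1$, which force $\mu_0=\cdots=\mu_{m+2n-2}=0$ by triangularity and $\mathbf{f}_0\neq 0$; this is the content of \eqref{eq:17}.)

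The genuine gap is the one you flag yourself: the middle assertion concerns \emph{all} normalized PH curves, i.e.\ elements of $\mathcal{Q}_{\mathcal{A},\beta}^{-2,M}$ for arbitrary $M$, while your space equality only covers $M=2a+1$. Neither of your proposed repairs closes it as stated. The induction on $M$ needs the bound $\dim\mathcal{Q}_{\mathcal{A},\beta}^{m,M}\le\dim\mathcal{Q}_{\mathcal{A},\beta}^{m,M-1}+1$, which the paper asserts in passing but which you would have to prove (it amounts to a ``highest coefficient'' analogue of Lemma~\ref{lem:lowCoef}); and invoking Theorem~\ref{th:dim-single-root} or the decomposition results of Section~\ref{sec:general-denominators} is circular, since those are proved \emph{from} Lemma~\ref{lem:qbasis}. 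The paper's route avoids the issue entirely and is available within your own coefficient framework: for $n=1$ the critical ($2n$-th) equation $0=\mu_0\mathbf{f}_1+\mu_1\mathbf{f}_0$ forces $\mu_0=\mu_1=0$ by genericity and hence $\mathbf{b}_0=0$; for $n=2$ the critical equation $0=\mu_1\mathbf{f}_2+\mu_2\mathbf{f}_1+\mu_3\mathbf{f}_0$ (after $\mu_0=0$) forces $\mu_1=\mu_2=\mu_3=0$ and hence $\mathbf{b}_0=\mathbf{b}_1=0$. These deductions are valid for \emph{every} numerator degree $N$, so no element of $\mathcal{Q}_{\mathcal{A},\beta}^{-1,M}$ or $\mathcal{Q}_{\mathcal{A},\beta}^{-2,M}$, for any $M$, has a non-zero coefficient at power $-1$ or $-2$; together with the normalization $\mathbf{r}_0=0$ this is exactly the claim. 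With that substitution your proof is complete.
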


The observation for the lowest non-zero coefficient in Lemma~\ref{lem:qbasis} can be extended to any PH curve. Taking into account the case ${\mathbf r}_m=0$ which is in particular always true for $m \in \{-2, -1, 0\}$ we can formulate

\begin{lemma}
  \label{lem:lowCoef}
  Let ${\mathbf q}=\sum_{i=m}^M {\mathbf r}_i(t-\beta)^i\in {\mathcal Q}_{{\mathcal A},\beta}^{m,M}$. Then the coefficient ${\mathbf r}_m$ is a multiple of ${\mathbf F}(\beta)$.
\end{lemma}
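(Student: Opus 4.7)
The strategy is to exploit the defining relation $\mathbf{r}'\times\mathbf{F}=0$ of Problem~\ref{pr1} directly via Laurent expansion at~$\beta$. A preliminary observation I would record is that $\mathbf{F}(\beta)\neq 0$: if all three components of $\mathbf{F}$ vanished at $\beta$, then (since $\mathbf{F}$ has real coefficients) they would also vanish at $\bar\beta$, and $(t-\beta)(t-\bar\beta)$ would be a common real polynomial factor of the components, contradicting the standing assumption that $\mathbf{F}$ has no real polynomial factor. Pointwise parallelism of $\mathbf{r}'$ and $\mathbf{F}$, together with $\mathbf{F}\not\equiv 0$, then lets me write $\mathbf{r}'(t)=\nu(t)\mathbf{F}(t)$ for a uniquely determined rational scalar function $\nu$; alternatively, Lemma~\ref{lem:2} applied to $\mathbf{q}=-2\mathbf{b}/\alpha$ produces $\nu = 2\mu/\alpha^{2}$ by direct differentiation.

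Next I would Laurent-expand both sides of $\mathbf{r}'=\nu\mathbf{F}$ around~$\beta$: the left-hand side is $\sum_{i\geq m} i\,\mathbf{r}_i(t-\beta)^{i-1}$, the factor $\mathbf{F}$ starts with $\mathbf{F}(\beta)+\mathbf{f}_1(t-\beta)+\cdots$, and $\nu(t)=\sum_{k\geq k_0}\nu_k(t-\beta)^k$ with $\nu_{k_0}\neq 0$. Comparing lowest-order non-zero coefficients, the right-hand side contributes $\nu_{k_0}\mathbf{F}(\beta)(t-\beta)^{k_0}$, non-zero by the previous step. Assuming $\mathbf{r}_m\neq 0$ and $m\neq 0$, the lowest non-zero term on the left is $m\,\mathbf{r}_m(t-\beta)^{m-1}$; matching orders forces $k_0=m-1$ and matching coefficients gives $m\,\mathbf{r}_m = \nu_{m-1}\mathbf{F}(\beta)$, so $\mathbf{r}_m = (\nu_{m-1}/m)\mathbf{F}(\beta)$, as claimed.

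What remains is the edge cases. If $\mathbf{r}_m=0$, the conclusion is trivial. If $m=0$, the normalization defining $\mathcal{Q}_{\mathcal{A},\beta}$ forces $\mathbf{r}_0=0$, reducing to the previous case. This is in fact the only delicate point in the argument: for $m=0$ the factor $m$ arising from differentiation kills the leading term of $\mathbf{r}'$, so without the $\mathbf{r}_0=0$ normalization the direct order comparison would fail; passing to $\mathcal{Q}_{\mathcal{A},\beta}$ makes the bookkeeping clean. Beyond handling this case, the proof is essentially a one-line Laurent-order comparison and I do not anticipate any substantial obstacle.
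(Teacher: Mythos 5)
Your proof is correct and is essentially the paper's own argument in different clothing: the paper compares lowest-order coefficients in the coefficient-wise system \eqref{eq:linear-system} derived from $\alpha'{\mathbf b}-\alpha{\mathbf b}'=\mu\mathbf F$ (the first $n-1$ equations force $\mu_0=\dots=\mu_{n-2}=0$ and the $n$-th then reads $(-n){\mathbf b}_0=\mu_{n-1}{\mathbf f}_0$), which is precisely your lowest-order Laurent comparison of ${\mathbf r}'=(2\mu/\alpha^2)\mathbf F$ written in terms of ${\mathbf b}$ and $\mu$ instead of ${\mathbf r}$ and $\nu$. Your treatment of the edge cases ${\mathbf r}_m=0$ and $m=0$ (via the normalization ${\mathbf r}_0=0$) and your justification that $\mathbf F(\beta)\neq 0$ are both sound and consistent with what the paper uses implicitly.
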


The previous lemmas allow us to construct bases of ${\mathcal Q}_{{\mathcal A},\beta}$, $\mathcal{R}_{{\mathcal A},\beta}$, ${\mathcal Q}_{{\mathcal A},\beta}^{m,M}$ and $\mathcal{R}_{{\mathcal A},\beta}^{m,M}$.

\begin{theorem}
  \label{th:dim-single-root}
  Let $\mathbb{I} \coloneqq \{-2, -1, 0\}$ and assume that $\mathcal A$, $\beta$ are generic.
  \begin{enumerate}
  \item The space of normalized solutions is the direct sum of one-dimensional linear spaces, c.f. Figure~\ref{fig:1cuts}:
    \begin{equation}\label{eq:8}
      {\mathcal Q}_{{\mathcal A},\beta} = \bigoplus_{m \notin \mathbb{I}} {\mathcal Q}_{{\mathcal A},\beta}^{m,M_0(m)}.
    \end{equation}
  \item A subset of this list of one-dimensional spaces gives the space of normalized Laurent cuts:
    \begin{equation}\label{eq:9}
      {\mathcal Q}_{{\mathcal A},\beta}^{m,M} = \bigoplus_{\ell \in \mathbb{L}}
      {\mathcal Q}_{{\mathcal A},\beta}^{\ell,M_0(\ell)},\quad
      \mathbb{L} = \{ \ell \in \Z \mid m
      \le \ell, M_0(\ell) \le M \} \setminus \mathbb{I}.
    \end{equation}
  \item The non-normalized solutions are obtained by adding translations:
    \begin{equation}
      \label{eq:10}
      \mathcal{R}_{{\mathcal A},\beta} = \mathbb{R}^3 \oplus {\mathcal Q}_{{\mathcal A},\beta}.
    \end{equation}
  \item The same is true for the Laurent cuts but we have to distinguish whether the constant coefficient belongs to the Laurent cut or not:
    \begin{equation}
      \label{eq:11}
      \mathcal{R}_{{\mathcal A},\beta}^{m,M} =
      \begin{cases}
        {\mathcal Q}_{{\mathcal A},\beta}^{m,M} \oplus \mathbb{R}^3 & \text{ if $m \le 0 \le M$},\\
        {\mathcal Q}_{{\mathcal A},\beta}^{m,M} & \text{otherwise.}
      \end{cases}
    \end{equation}
  \end{enumerate}
\end{theorem}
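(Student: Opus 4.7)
The plan is to use the explicit one-dimensional spaces from Lemma~\ref{lem:qbasis} as building blocks and argue by a leading-order subtraction on Laurent expansions. The crucial general fact is that, by representation~\eqref{eq:3} with $\alpha = (t-\beta)^n$ and $\mathbf{b}$ a polynomial, every $\mathbf{r} \in \mathcal{R}_{\mathcal{A},\beta}$ has a Laurent expansion at $\beta$ with only finitely many nonzero coefficients; this makes any ``kill the lowest term'' procedure terminate.

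For part (i), I would first show linear independence of $\{{\mathbf q}_{\mathcal A,\beta}^m\}_{m\notin \mathbb I}$ by a triangular argument: in a finite linear combination, the basis vector with smallest index $m$ contributes a nonzero Laurent coefficient at order $m$ (equal to its nonzero leading coefficient ${\mathbf F}(\beta)$ from Lemma~\ref{lem:qbasis}), while no other basis vector contributes to that order. To see that these spaces span $\mathcal{Q}_{\mathcal{A},\beta}$, take $\mathbf{r} \in \mathcal{Q}_{\mathcal A,\beta}$ with lowest nonzero Laurent index $m_0$. Lemma~\ref{lem:qbasis} rules out $m_0 \in \mathbb I$, and Lemma~\ref{lem:lowCoef} gives ${\mathbf r}_{m_0} = c\,{\mathbf F}(\beta)$ for some $c \in \R$. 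Then $\mathbf{r} - c\,{\mathbf q}_{\mathcal A,\beta}^{m_0}$ still lies in $\mathcal{Q}_{\mathcal A,\beta}$ but has strictly larger lowest index. Iterating this step, the procedure terminates at the zero curve by finiteness of Laurent support.

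Part (ii) applies exactly the same argument inside $\mathcal{Q}_{\mathcal{A},\beta}^{m,M}$. The only delicate point — and what I expect to be the main obstacle — is to verify that every basis vector used in the subtraction already lies in $\mathcal{Q}_{\mathcal A,\beta}^{m,M}$, i.e.\ that its index $\ell$ belongs to $\mathbb L$. Suppose the current residue has lowest index $\ell_0$; it is then a nonzero element of $\mathcal{Q}_{\mathcal A,\beta}^{\ell_0,M}$. By the minimality clause of Lemma~\ref{lem:M0-existence}, this forces $M_0(\ell_0) \le M$, and the constraint $\ell_0 \ge m$ holds trivially, so $\ell_0 \in \mathbb L$ and ${\mathbf q}_{\mathcal A,\beta}^{\ell_0}$ lies in $\mathcal{Q}_{\mathcal A,\beta}^{m,M}$. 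Thus the subtraction process never leaves the prescribed subspace, and \eqref{eq:9} follows.

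For parts (iii) and (iv) I would use the splitting $\mathbf{r} = (\mathbf{r} - {\mathbf r}_0) + {\mathbf r}_0$. The constant term ${\mathbf r}_0 \in \R^3$ trivially solves~\eqref{eq:1}, and subtracting it only kills the zeroth Laurent coefficient, leaving a normalized solution in $\mathcal{Q}_{\mathcal A,\beta}$. The intersection $\R^3 \cap \mathcal{Q}_{\mathcal A,\beta} = \{0\}$ is immediate from the definition~\eqref{eq:6}, giving~\eqref{eq:10}. For~\eqref{eq:11}, if $m \le 0 \le M$ the same decomposition stays inside $\mathcal{R}_{\mathcal A,\beta}^{m,M}$; if instead $0 \notin [m,M]$, then the Laurent-cut conditions force ${\mathbf r}_0 = 0$, so the normalization~\eqref{eq:6} is automatic and $\mathcal{R}_{\mathcal A,\beta}^{m,M} = \mathcal{Q}_{\mathcal A,\beta}^{m,M}$.
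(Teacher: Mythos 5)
Your proposal is correct and follows essentially the same route as the paper: a descending induction on the lowest Laurent index, subtracting multiples of the canonical curves $\mathbf{q}_{\mathcal A,\beta}^{m}$ from Lemma~\ref{lem:qbasis} and invoking Lemmas~\ref{lem:M0-existence} and~\ref{lem:lowCoef} at each step, with parts (iii) and (iv) obtained by splitting off the constant coefficient. Your explicit check that $M_0(\ell_0)\le M$ (so the subtraction never leaves $\mathcal{Q}_{\mathcal A,\beta}^{m,M}$ and never enlarges the upper Laurent bound) is a point the paper leaves implicit, but the argument is the same.
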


\begin{proof}
  Equation~\eqref{eq:8} is equivalent to the fact that any element of ${\mathcal Q}_{{\mathcal A},\beta}$ can be expressed as a unique linear combination of elements ${\mathbf q}^{m}_{{\mathcal A},\beta}$, $m\in \mathbb Z \setminus \mathbb I$ as defined in Lemma~\ref{lem:qbasis}; Equation~\eqref{eq:9} merely adds the restriction $m\in \mathbb L$.

  Suppose that ${\mathbf q}=\sum_{i=m}^M {\mathbf r}_i(t-\beta)^i\in {\mathcal Q}_{{\mathcal A},\beta}^{m,M}$ and let us prove existence of linear combinations by induction on $(M-m)$. If $m=M$ only zero PH curves ${\mathbf q}$ exist in ${\mathcal Q}_{{\mathcal A},\beta}^{m,M}$ and the linear combination to obtain ${\mathbf q}$ is trivial. Suppose now that $m < M$ and ${\mathbf q}$ is non-zero. By Lemma~\ref{lem:M0-existence} $M \ge M_0(m)$ and, by Lemma~\ref{lem:lowCoef}, the coefficient ${\mathbf r}_m$ is a multiple of ${\mathbf F}(\beta)$. Consequently there exists $\lambda \in \mathbb C$ so that $({\mathbf q}-\lambda {\mathbf q}_{{\mathcal A},\beta}^m) \in {\mathcal Q}_{{\mathcal A},\beta}^{m+1,M}$. This is the induction step and existence of suitable linear combinations is proved.

  The uniqueness of the linear combination follows from the obvious linear independence of the set of basis curves $\{{\mathbf q}_{{\mathcal A}, \beta}^m\}_{m\in \mathbb Z\setminus \mathbb L}$. Equations~\eqref{eq:10} and \eqref{eq:11} are direct consequences of \eqref{eq:8} and \eqref{eq:9}, respectively.
\end{proof}

\begin{figure}
  \centering
\includegraphics[width=12cm]{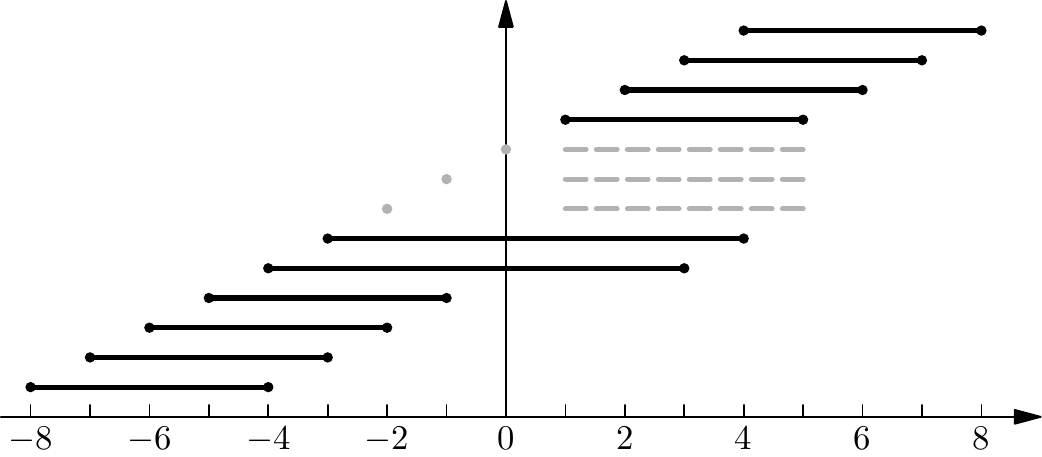}
  \caption{The lines represent intervals $[m, M_0(m)]$ for the decomposition
    \eqref{eq:8} of ${\mathcal Q}_{\mathcal A, \beta}$ into one-dimensional
    space for generic $\alpha$ and $\deg \mathcal A=2$. The dots represent the
    ``missing'' spaces to $m \in \{-2,-1,0\}$, the dashed lines represent
    their smallest Laurent cuts.}
  \label{fig:1cuts}
\end{figure}

\begin{remark}
  For non-generic cases analogous results to Lemma~\ref{lem:M0}, Lemma~\ref{lem:qbasis} and Theorem~\ref{th:dim-single-root} hold and in any concrete case can be easily determined. Their complete enumeration is rather complicated and will be omitted. Let us point out, however, the two essential differences:
  \begin{itemize}
  \item For non-generic cases the function $M_0(m)$ changes and some of its values may be lower than in the generic case.
  \item In particular in the case of linearly dependent ${\mathbf f}_{0}$, ${\mathbf f}_1$, ${\mathbf f}_{2}$, the space identities of Lemma \ref{lem:qbasis} may not hold and the index set $\mathbb{I}$ is to be replaced by $\{-1,0\}$ or even~$\{0\}$.
  \end{itemize}
\end{remark}

Now we illustrate Theorem~\ref{th:dim-single-root} an example. The computation
of basis vectors is straightforward: Solve the linear system resulting from
\eqref{eq:4} for $\alpha = (t-\beta)^m$, ${\mathbf b} = \sum_{i=0}^N
\mathbf{b}_i (t-\beta)^i $ where $N = m + M_0(m)$. In this way we obtain the
minimal number of non-vanishing Laurent coefficients. Our results so far
guarantee a one-dimensional solution space. The ambiguity is resolved by having
$\mathbf b_0 = -2\mathbf{F}(\beta)$, c.f.~\eqref{eq:3}.

\begin{example}
  \label{ex:single-root}
  We consider the example given by
  \begin{equation*}
    \mathcal{A} = (7 t^2-22 t+10) + (-19 t^2+14 t)\qi + (-26 t^2+16 t)\qj + (-2t^2+12 t)\qk.
  \end{equation*}
  and $\beta = -10$ which already appeared in other places, c.f., \cite[Example~5.5]{kalkan22} or \cite[Example~1]{FaroukiSir2} where it is discussed in the context of rational PH curves with rational rotation minimizing motions. It can easily be confirmed that the data $\mathcal{A}$, $\beta$ is generic.

  Using Lemma~\ref{lem:M0} with $a = \deg\mathcal{A} = 2$ we compute $M_0(m)$ for some values of $m$:
  \begin{center}
    \begin{tabular}{c|rrrrrrrrrrr}
      $m$      & $-7$ & $-6$ & $-5$ & $-4$ & $-3$ & $-2$ & $-1$ & $0$ & $1$ & $2$ & $3$\\
      \hline
      $M_0(m)$ & $-3$ & $-2$ & $-1$ & $3$  & $4$  & $5$  & $5$  & $5$ & $5$ & $6$ & $7$
    \end{tabular}
  \end{center}
  Each space ${\mathcal Q}_{{\mathcal A},\beta}^{m,M_{0}}$ is of dimension one. We present basis vectors for some of these spaces that are normalized as suggested by Lemma~\ref{lem:qbasis}:
  \begin{multline*}
    {\mathbf q}_{{\mathcal A},-10}^{-5} =
    \begin{psmallmatrix}-3298104\\9815520\\6340640\end{psmallmatrix}(t+10)^{-5}+\begin{psmallmatrix}1571650\\-4898740\\-2913180\end{psmallmatrix}(t+10)^{-4}+\begin{psmallmatrix}-897340/3\\975440\\1599440/3\end{psmallmatrix}(t+10)^{-3}\\+\begin{psmallmatrix}28430\\-96880\\-48560\end{psmallmatrix}(t+10)^{-2}+\begin{psmallmatrix}-1350\\4800\\2200\end{psmallmatrix}(t+10)^{-1}     \in {\mathcal Q}_{{\mathcal A},-10}^{-5,-1},
  \end{multline*}
  \begin{multline*}
    {\mathbf q}_{{\mathcal A},-10}^{-4} =
    \begin{psmallmatrix}-3298104\\9815520\\6340640\end{psmallmatrix}(t+10)^{-4}\\+\begin{psmallmatrix}1007210705356/779325\\-636312533296/155865\\-1107790524016/467595\end{psmallmatrix}(t+10)^{-3}+\begin{psmallmatrix}-6694077029408/36628275\\22536453169244/36628275\\11582106201908/36628275\end{psmallmatrix}(t+10)^{-2}\\+\begin{psmallmatrix}406329786974/36628275\\-1444668933424/36628275\\-220650209776/12209425\end{psmallmatrix}(t+10)^{-1}+\begin{psmallmatrix}2069307793/36628275\\-246327880/1465131\\-795217132/7325655\end{psmallmatrix}(t+10)\\+\begin{psmallmatrix}-346728479/73256550\\571515752/36628275\\306325034/36628275\end{psmallmatrix}(t+10)^{2}+\begin{psmallmatrix}320307/2441885\\-3416608/7325655\\-4697836/21976965\end{psmallmatrix}(t+10)^{3}     \in {\mathcal Q}_{{\mathcal A},-10}^{-4,3},
  \end{multline*}
  \begin{multline*}
    {\mathbf q}_{{\mathcal A},-10}^{1} =
    \begin{psmallmatrix}-3298104\\9815520\\6340640\end{psmallmatrix}(t+10)+\begin{psmallmatrix}628660\\-1959496\\-1165272\end{psmallmatrix}(t+10)^{2}+\begin{psmallmatrix}-179468/3\\195088\\319888/3\end{psmallmatrix}(t+10)^{3}\\+\begin{psmallmatrix}2843\\-9688\\-4856\end{psmallmatrix}(t+10)^{4}+\begin{psmallmatrix}-54\\192\\88\end{psmallmatrix}(t+10)^{5}     \in {\mathcal Q}_{{\mathcal A},-10}^{1,5},
  \end{multline*}
  \begin{multline*}
    {\mathbf q}_{{\mathcal A},-10}^{2} =
    \begin{psmallmatrix}-3298104\\9815520\\6340640\end{psmallmatrix}(t+10)^{2}+\begin{psmallmatrix}2514640/3\\-7837984/3\\-1553696\end{psmallmatrix}(t+10)^{3}+\begin{psmallmatrix}-89734\\292632\\159944\end{psmallmatrix}(t+10)^{4}\\+\begin{psmallmatrix}22744/5\\-77504/5\\-38848/5\end{psmallmatrix}(t+10)^{5}+\begin{psmallmatrix}-90\\320\\440/3\end{psmallmatrix}(t+10)^{6}     \in {\mathcal Q}_{{\mathcal A},-10}^{2,6}.
  \end{multline*}
  The spaces ${\mathcal Q}_{{\mathcal A},-10}^{-5,-1}$ and ${\mathcal Q}_{{\mathcal A},-10}^{-4,3}$ contain rational curves, while ${\mathcal Q}_{{\mathcal A},-10}^{1,5}$ ($= {\mathcal Q}_{{\mathcal A},-10}^{0,5} = {\mathcal Q}_{{\mathcal A},-10}^{-1,5} = {\mathcal Q}_{{\mathcal A},-10}^{-2,5}$) is polynomial as well as ${\mathcal Q}_{{\mathcal A},-10}^{2,6}$. Note the absence of the constant coefficients in ${\mathcal Q}_{{\mathcal A},-10}^{-4,3}$ and the absence of the constant coefficients and the coefficients to $(t+10)$ in ${\mathcal Q}_{{\mathcal A},-10}^{2,6}$. By Theorem~\ref{th:dim-single-root}, Items~2 and 4, we have
  \begin{equation*}
    \mathcal{R}_{{\mathcal A},-10}^{-7,-1} = {\mathcal Q}_{{\mathcal A},-10}^{-7,-1} =
    {\mathcal Q}_{{\mathcal A},-10}^{-7,-3} \oplus
    {\mathcal Q}_{{\mathcal A},-10}^{-6,-2} \oplus
    {\mathcal Q}_{{\mathcal A},-10}^{-5,-1}
  \end{equation*}
  and
  \begin{equation*}
    \mathcal{R}_{{\mathcal A},-10}^{-5,5} = {\mathcal Q}_{{\mathcal A},-10}^{-5,5} \oplus \R^3 =
    {\mathcal Q}_{{\mathcal A},-10}^{-5,-1} \oplus
    {\mathcal Q}_{{\mathcal A},-10}^{-4,3} \oplus
    {\mathcal Q}_{{\mathcal A},-10}^{-3,4} \oplus
    {\mathcal Q}_{{\mathcal A},-10}^{1,5} \oplus
    \R^3.
  \end{equation*}
\end{example}

Referring to above example, we note that ${\mathcal Q}_{{\mathcal A},-10}^{1,5} \oplus \R^3$ is just the space of polynomial PH curves of degree at most five, a space which is independent of the concrete value for $\beta$. Moreover, the remaining part ${\mathcal Q}_{{\mathcal A},-10}^{-5,-1} \oplus {\mathcal Q}_{{\mathcal A},-10}^{-4,3} \oplus {\mathcal Q}_{{\mathcal A},-10}^{-3,4}$ in the additive decomposition of $\mathcal{R}_{{\mathcal A},-10}^{-5,5}$ does not contain any polynomial PH curves. These observations generalize:

\begin{definition}
  \label{def:Xspace}
  For negative $m$ we define the spaces of purely rational PH curves which do not contain any polynomial PH curves:
  \begin{equation*}
    \mathcal{X}_{{\mathcal A},\beta} \coloneqq \bigoplus_{i=-\infty}^{-3} {\mathcal Q}_{{\mathcal A},\beta}^{i,M_0(i)}
    \quad\text{and}\quad
    \mathcal{X}_{{\mathcal A},\beta}^{m,M} \coloneqq \bigoplus_{i \in \mathbb{L}} {\mathcal Q}_{{\mathcal A},\beta}^{i,M_0(i)},
\end{equation*}
  where
  $\mathbb{L} = \{ i \in \Z \mid -3>i \ge m, M_0(i) \le M\}$.
\end{definition}

\begin{theorem}
  \label{th:canonical-decomposition}
  For any $M \in \mathbb{N}_0$, $m < 0$ and $\beta \in \C$ we have $\mathcal{R}_{{\mathcal A},\beta}^{0,M} =\mathcal{P}_{{\mathcal A}}^M$ (the space of polynomial PH curves of degree at most $M$),
  \begin{equation}
    \label{eq:12}
    \mathcal{R}_{{\mathcal A},\beta} =  \mathcal{X}_{{\mathcal A},\beta} \oplus \mathcal{P}_{{\mathcal A}} \quad \text{and}\quad \mathcal{R}_{{\mathcal A},\beta}^{m,M}=\mathcal{X}_{{\mathcal A},\beta}^{m,M}   \oplus \mathcal{P}_{{\mathcal A}}^M.
  \end{equation}
\end{theorem}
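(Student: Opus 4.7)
The plan is to reduce everything to the basis decomposition already established in Theorem~\ref{th:dim-single-root}, by splitting its index set $\Z\setminus\mathbb{I}$ at the gap $\mathbb{I}=\{-2,-1,0\}$ into the purely polynomial tail ($m\ge 1$) and the purely rational tail ($m\le -3$).

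First I would prove the auxiliary identity $\mathcal{R}_{{\mathcal A},\beta}^{0,M}=\mathcal{P}_{{\mathcal A}}^M$. An element of the left side has a Laurent expansion at $\beta$ supported in $[0,M]$, hence is a polynomial curve of degree at most $M$ satisfying \eqref{eq:1}, so it lies in $\mathcal{P}_{{\mathcal A}}^M$. Conversely, any $\mathbf p\in\mathcal{P}_{{\mathcal A}}^M$ is a polynomial of degree at most $M$ that solves \eqref{eq:1}, and its Taylor expansion at $\beta$ (a special case of a Laurent series) is supported in $[0,M]$, so $\mathbf p\in\mathcal{R}_{{\mathcal A},\beta}^{0,M}$.

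Next I would establish the refined decomposition of the polynomial space: $\mathcal{P}_{{\mathcal A}}^M=\R^3\oplus\bigoplus_{\ell\ge 1,\,M_0(\ell)\le M}{\mathcal Q}_{{\mathcal A},\beta}^{\ell,M_0(\ell)}$. This follows from the previous step combined with Item~4 of Theorem~\ref{th:dim-single-root}, which gives $\mathcal{R}_{{\mathcal A},\beta}^{0,M}={\mathcal Q}_{{\mathcal A},\beta}^{0,M}\oplus\R^3$, together with Item~2, which decomposes ${\mathcal Q}_{{\mathcal A},\beta}^{0,M}$ using only indices $\ell\ge 1$ (since $\mathbb{I}$ is removed and $m=0$ blocks negative indices). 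Granted this, for $m<0$ and $M\ge 0$ I would combine Items~2 and~4 of Theorem~\ref{th:dim-single-root} to write
\begin{equation*}
  \mathcal{R}_{{\mathcal A},\beta}^{m,M} = \R^3 \oplus \bigoplus_{\substack{m\le\ell\le -3\\ M_0(\ell)\le M}} {\mathcal Q}_{{\mathcal A},\beta}^{\ell,M_0(\ell)} \oplus \bigoplus_{\substack{\ell\ge 1\\ M_0(\ell)\le M}} {\mathcal Q}_{{\mathcal A},\beta}^{\ell,M_0(\ell)},
\end{equation*}
and regroup: the first and third summands form $\mathcal{P}_{\mathcal A}^M$ by the previous paragraph, while the middle summand is precisely $\mathcal{X}_{{\mathcal A},\beta}^{m,M}$ by Definition~\ref{def:Xspace}. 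The untruncated version $\mathcal{R}_{{\mathcal A},\beta}=\mathcal{X}_{{\mathcal A},\beta}\oplus\mathcal{P}_{{\mathcal A}}$ follows identically from Items~1 and~3 of Theorem~\ref{th:dim-single-root}, or by taking unions over $M$.

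The only genuine content beyond regrouping is verifying that the direct sum notation is justified, i.e., that $\mathcal{X}_{{\mathcal A},\beta}^{m,M}\cap\mathcal{P}_{{\mathcal A}}^M=\{0\}$. This is the step I expect to require the most care, but it is short: if $\mathbf r=\sum_{i\le -3}\lambda_i\mathbf q^{i}_{{\mathcal A},\beta}$ is polynomial and nonzero, let $m_0\le -3$ be the smallest index with $\lambda_{m_0}\ne 0$. By Lemma~\ref{lem:qbasis} the Laurent coefficient of $\mathbf r$ at $(t-\beta)^{m_0}$ equals $\lambda_{m_0}\mathbf{F}(\beta)$, which is nonzero because the genericity assumption forces $\mathbf{F}(\beta)=\mathbf{f}_0\ne 0$. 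This contradicts polynomiality of $\mathbf r$, so the intersection is trivial and the directness of both sums in \eqref{eq:12} is verified.
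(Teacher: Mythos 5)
Your proof is correct and follows essentially the same route as the paper: both reduce to the direct-sum decomposition of Theorem~\ref{th:dim-single-root} and regroup its one-dimensional summands at the gap $\mathbb{I}=\{-2,-1,0\}$ into a polynomial part (indices $\ell\ge 1$ together with $\R^3$, identified with $\mathcal{P}_{\mathcal A}^M$ via $\mathcal{R}_{{\mathcal A},\beta}^{0,M}=\mathcal{P}_{{\mathcal A}}^M$) and the purely rational part $\mathcal{X}_{{\mathcal A},\beta}^{m,M}$. The only divergence is in checking directness: the paper notes that $\mathcal{X}_{{\mathcal A},\beta}$ contains no polynomial solutions because $M_0(-3)=2a$ while Lemma~\ref{lem:polPH} excludes non-constant polynomial solutions of degree below $2a+1$, whereas you exhibit via Lemma~\ref{lem:qbasis} a nonzero Laurent coefficient $\lambda_{m_0}\mathbf{F}(\beta)$ at a negative power; both arguments are valid.
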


\begin{proof}
  The space $\mathcal{R}_{{\mathcal A},\beta}^{0,M}$ contains all solutions whose Laurent cut can be written as a Taylor polynomial of degree at most $M$ at $t = \beta$. It obviously equals the space $\mathcal{P}_{\mathcal A}^M$ and is independent of~$\beta$.

  By Theorem~\ref{th:dim-single-root} we have
  \begin{equation}
    \label{eq:13}
    \mathcal{R}_{{\mathcal A},\beta} = \bigoplus_{m \notin \mathbb{I}} {\mathcal Q}_{{\mathcal A},\beta}^{m,M_0(m)}\oplus \mathbb R^3
    = \underbrace{\biggl( \bigoplus_{i=-\infty}^{-3} {\mathcal Q}_{{\mathcal A},\beta}^{i,M_0(i)} \biggr)}_{\mathcal{X}_{{\mathcal A},\beta}} \oplus \underbrace{\biggl( \bigoplus_{i=1}^\infty {\mathcal Q}_{{\mathcal A},\beta}^{i,M_0(i)} \biggr) \oplus \R^3}_{\mathcal{P}_{\mathcal A}}.
  \end{equation}
  Note that the sum is really direct because $\mathcal{X}_{{\mathcal A},\beta}$ does not contain any polynomial PH curves due to $M_0(-3)=2a$ and by Lemma~\ref{lem:polPH} there are no non-constant polynomial PH curves of degree less than $2a+1$. The remaining equality in \eqref{eq:12} can be seen similarly.
\end{proof}

Theorem~\ref{th:canonical-decomposition} provides a canonical decomposition of $\mathcal{R}_{{\mathcal A},\beta}^{m,M}$ into spaces of non-polynomial and of polynomial PH curves which we will often use in the next section. In case of $M < 0$, the space $\mathcal{P}_{{\mathcal A}}^{M}$ only consists of the zero solution, in case of $0 \le M \le 2a$, it equals the space $\R^3$ of constant solutions.

\subsection{General Denominators}
\label{sec:general-denominators}
We are now ready to relate the spaces of PH curves for different denominator roots and show that any PH curve admits a direct additive decomposition into PH curves with single root denominators. Suppose that a rational curve ${\mathbf r}(t)$ has several roots in the denominator, one of them being $\beta$. Then it can still be written using its Laurent expansion \eqref{eq:5} but with the difference that infinitely many positive power coefficients are non-zero. The lowest non-zero coefficient ${\mathbf r}_m$ is determined by the multiplicity $-m$ of the root $\beta$. The radius of convergence of the infinite sum is given by the distance of the closest other root.

The following lemma is essential for decomposing such a curve into rational curves with single denominator roots.

\begin{lemma}
  \label{lem:finL}
  For a given $\mathcal{A} \in \H[t]$ let $\mathbf r\in \mathcal{R}_{{\mathcal A}}$ be a PH curve and assume that $\beta \in \mathbb C$ is a root of its denominator with multiplicity $n>0$. Then $\mathbf r$ has a Laurent expansion of the shape
  \begin{equation}
    \label{eq:14}
    {\mathbf r}(t) = \sum_{i=m}^\infty {\mathbf r}_i (t-\beta)^i
  \end{equation}
  with $m = -n$ and there exists a PH curve $\tilde {\mathbf r}(t) \in \mathcal{R}_{{\mathcal A},\beta}$ with finite Laurent expansion
  \begin{equation}
    \label{eq:15}
    \tilde {\mathbf r}(t) = \sum_{i=m}^{2a} \tilde {\mathbf r}_i (t-\beta)^i
  \end{equation}
  so that $\tilde {\mathbf r}_i={\mathbf r}_i$ for $i=m, m-1, \ldots, 0$.
\end{lemma}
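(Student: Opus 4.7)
Since $n$ is the pole multiplicity of $\beta$ in $\mathbf{r}$, the assertion $m=-n$ is immediate, so the content of the lemma is the existence of $\tilde{\mathbf{r}}$. My plan is to build $\tilde{\mathbf{r}}$ iteratively from the basis elements $\mathbf{q}^{k}_{\mathcal{A},\beta}$ supplied by Lemma~\ref{lem:qbasis}. As a preliminary I would observe that Lemma~\ref{lem:lowCoef} extends to \emph{any} $\mathbf{s}\in\mathcal{R}_{\mathcal{A}}$ whose Laurent expansion at $\beta$ starts at an index $k\neq 0$: the $(t-\beta)^{k-1}$ coefficient of the identity $\mathbf{s}'\times\mathbf{F}=0$ is $k\,\mathbf{s}_k\times\mathbf{F}(\beta)$, so $\mathbf{s}_k$ must be a scalar multiple of $\mathbf{F}(\beta)$.

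With this in hand I would determine scalars $\lambda_{-n},\ldots,\lambda_{-3}$ inductively. Setting $\mathbf{s}^{(-n)}:=\mathbf{r}$ and $\mathbf{s}^{(k+1)}:=\mathbf{s}^{(k)}-\lambda_k\,\mathbf{q}^{k}_{\mathcal{A},\beta}$, the induction hypothesis is that $\mathbf{s}^{(k)}\in\mathcal{R}_{\mathcal{A}}$ has Laurent support in $[k,\infty)$. By the extended low-coefficient fact $\mathbf{s}^{(k)}_k$ is a multiple of $\mathbf{F}(\beta)$, while Lemma~\ref{lem:qbasis} tells us that the leading coefficient of $\mathbf{q}^{k}_{\mathcal{A},\beta}$ is exactly $\mathbf{F}(\beta)$, so $\lambda_k$ is uniquely determined by the requirement $\mathbf{s}^{(k+1)}_k=0$. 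After step $k=-3$ the remainder $\mathbf{s}:=\mathbf{s}^{(-2)}$ has $\mathbf{s}_i=0$ for every $i\le-3$, and by Lemma~\ref{lem:M0} the supports of the basis elements used all lie in $[-n,M_0(-3)]=[-n,2a]$.

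The main obstacle, and the step I expect to require the most care, is showing that $\mathbf{s}_{-2}$ and $\mathbf{s}_{-1}$ then vanish automatically, for only then can the remaining discrepancy $\mathbf{s}_0$ be absorbed by a constant vector $\mathbf{c}\in\R^3$ to produce $\tilde{\mathbf{r}}$. This automatic vanishing is precisely the analytic manifestation of the exceptional index set $\mathbb{I}=\{-2,-1,0\}$ in Theorem~\ref{th:dim-single-root}. Since $\mathbf{s}\in\mathcal{R}_{\mathcal{A}}$ and $\mathbf{s}_i=0$ for $i\le-3$, reading off the coefficients of $\mathbf{s}'\times\mathbf{F}=0$ at orders $(t-\beta)^{-3}$, $(t-\beta)^{-2}$, $(t-\beta)^{-1}$ yields three cross-product equations purely in $\mathbf{s}_{-2}$, $\mathbf{s}_{-1}$ and the first three Taylor coefficients $\mathbf{f}_0,\mathbf{f}_1,\mathbf{f}_2$ of $\mathbf{F}$ at $\beta$. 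Substituting $\mathbf{s}_{-2}=\alpha\mathbf{f}_0$ from the first equation into the second produces $\mathbf{s}_{-1}=2\alpha\mathbf{f}_1+\gamma\mathbf{f}_0$, and inserting both into the third leaves $2\alpha\mathbf{f}_2+\gamma\mathbf{f}_1\in\mathrm{span}(\mathbf{f}_0)$; under the genericity assumption of Definition~\ref{def:generic} this forces $\alpha=\gamma=0$, as required. Setting $\mathbf{c}:=\mathbf{s}_0$ and $\tilde{\mathbf{r}}:=\mathbf{c}+\sum_{k=-n}^{-3}\lambda_k\,\mathbf{q}^{k}_{\mathcal{A},\beta}$ then completes the proof. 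For non-generic data the set $\mathbb{I}$ shrinks and additional basis elements at indices $-2$ or $-1$ absorb the extra coefficients, so the same iterative scheme still applies.
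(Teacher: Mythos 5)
Your argument is sound for generic data but takes a genuinely different route from the paper. You work ``from the outside'': you peel off the principal part of $\mathbf r$ at $\beta$ by inductively subtracting the canonical basis curves $\mathbf q^{k}_{\mathcal A,\beta}$ of Lemma~\ref{lem:qbasis} for $k=-n,\dots,-3$, justified by a pointwise extension of Lemma~\ref{lem:lowCoef} to curves with several denominator roots (reading off the lowest-order coefficient of $\mathbf s'\times\mathbf F=0$), and then verify by a direct cross-product computation at orders $(t-\beta)^{-3}$, $(t-\beta)^{-2}$, $(t-\beta)^{-1}$ that the residual coefficients $\mathbf s_{-2}$, $\mathbf s_{-1}$ vanish automatically; that computation is correct and gives a nice coordinate-free explanation of why the index set $\mathbb I=\{-2,-1,0\}$ is exceptional. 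The paper instead works ``from the inside'': it truncates $\mathbf b/\hat\alpha$ and $\mu/\hat\alpha^2$ as Taylor polynomials at $\beta$, checks that the first $2n$ equations of the linear system \eqref{eq:linear-system} --- including the critical $2n$-th one --- are thereby satisfied, and completes $\tilde{\mathbf b}$ by setting the remaining $\tilde\mu_i$ to zero, the bound $M=2a$ falling out of counting equations with non-vanishing right-hand side. Your version buys a conceptual link to the basis of Theorem~\ref{th:dim-single-root} and immediately places $\tilde{\mathbf r}$ in $\R^3\oplus\mathcal X^{-n,2a}_{\mathcal A,\beta}$; the paper's version is an explicit construction with an explicit degree bound on $\tilde{\mathbf b}$.

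The one real weakness is your dependence on genericity. Lemma~\ref{lem:finL} carries no genericity hypothesis, and the paper's proof uses none: the Taylor-truncation construction works verbatim for arbitrary admissible $\mathcal A$, $\beta$. Your proof invokes Lemma~\ref{lem:qbasis} (stated only for generic data), and your final vanishing argument for $\mathbf s_{-2}$, $\mathbf s_{-1}$ explicitly needs $\mathbf f_0$, $\mathbf f_1$, $\mathbf f_2$ to be linearly independent. The closing sentence about non-generic data --- extra basis elements at indices $-2$ or $-1$ absorbing the leftover coefficients --- is plausible and consistent with the paper's remark following Theorem~\ref{th:dim-single-root}, but those basis elements and the bounds $M_0(-2),M_0(-1)\le 2a$ needed to keep $\tilde{\mathbf r}$ inside the window $[-n,2a]$ are nowhere established, so as written that case is asserted rather than proved. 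Either supply that construction or fall back on the paper's genericity-free argument there.
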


\begin{theorem}
  \label{th:decomposition-multiple-roots}
  For $i \in \{1,2,\ldots,k\}$ let $\beta_i \in \mathbb C$ be mutually distinct numbers, $m_i \in \mathbb Z_0^-$ and $M_i \in \mathbb Z$ arbitrary. Then
  \begin{equation*}
    \bigcap_{i=1}^k \mathcal{R}_{{\mathcal A},\beta_i}^{m_i,M_i} =
    \mathcal{P}_{{\mathcal A}}^{\min\{M_1,M_2,\ldots,M_k\}}
  \end{equation*}
  and
  \begin{equation*}
    \sum_{i=1}^k \mathcal{R}_{{\mathcal A},\beta_i}^{m_i,M_i} =
    \biggl( \bigoplus \mathcal{X}_{{\mathcal A},\beta_i}^{m_i,M_i} \biggr) \oplus \mathcal{P}_{{\mathcal A}}^{\max\{M_1,M_2,\ldots,M_k\}}.
  \end{equation*}
\end{theorem}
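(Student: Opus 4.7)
The plan is to prove the intersection and sum claims separately, using Theorem~\ref{th:canonical-decomposition} together with the simple observation that any element of $\mathcal{R}_{{\mathcal A},\beta_i}$ is regular at every point other than $\beta_i$.

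For the intersection, any curve $\mathbf{r}$ lying in every $\mathcal{R}_{{\mathcal A},\beta_i}^{m_i,M_i}$ can have a pole only at each individual $\beta_i$; since the $\beta_i$ are mutually distinct, this forces $\mathbf{r}$ to have no poles at all, so $\mathbf{r}$ is polynomial. The Laurent expansion of $\mathbf{r}$ around any $\beta_i$ then coincides with its Taylor expansion, and the cut-off condition ${\mathbf r}_j = 0$ for $j > M_i$ translates to $\deg \mathbf{r} \le M_i$. Taking the minimum over $i$ yields $\mathbf{r} \in \mathcal{P}_{{\mathcal A}}^{\min\{M_1,\dots,M_k\}}$; the reverse inclusion is immediate, since $m_i \le 0$ makes the lower cut-off vacuous for polynomials.

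For the sum, I would apply Theorem~\ref{th:canonical-decomposition} to each summand and use the nestedness of the polynomial spaces to obtain
\begin{equation*}
  \sum_{i=1}^k \mathcal{R}_{{\mathcal A},\beta_i}^{m_i,M_i} = \sum_{i=1}^k \mathcal{X}_{{\mathcal A},\beta_i}^{m_i,M_i} + \mathcal{P}_{{\mathcal A}}^{\max\{M_1,\dots,M_k\}}.
\end{equation*}
It then remains to establish the internal directness of this sum, which I expect to be the main obstacle. The plan is to assume a vanishing relation $\sum_{i=1}^k \mathbf{r}^{(i)} + \mathbf{p} = 0$ with $\mathbf{r}^{(i)} \in \mathcal{X}_{{\mathcal A},\beta_i}^{m_i,M_i}$ and $\mathbf{p} \in \mathcal{P}_{{\mathcal A}}^{\max\{M_1,\dots,M_k\}}$, fix an arbitrary index $j$, and read off the Laurent principal part at $\beta_j$. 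Since $\mathbf{p}$ and each $\mathbf{r}^{(i)}$ with $i \ne j$ is regular at $\beta_j$, the principal part of $\mathbf{r}^{(j)}$ at $\beta_j$ must vanish. However, Definition~\ref{def:Xspace} together with Lemma~\ref{lem:qbasis} and the direct-sum decomposition in Theorem~\ref{th:dim-single-root}~(1) show that every non-zero element of $\mathcal{X}_{{\mathcal A},\beta_j}$ has a non-trivial leading coefficient at some power $\le -3$, so its principal part cannot vanish. Consequently $\mathbf{r}^{(j)} = 0$ for every $j$ and then $\mathbf{p} = 0$, giving the desired directness and completing the argument.
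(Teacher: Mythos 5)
Your proposal is correct and follows essentially the same route as the paper: both reduce the claim to Theorem~\ref{th:canonical-decomposition} and to the observation that the spaces $\mathcal{X}_{{\mathcal A},\beta_i}$ at distinct $\beta_i$ can only meet (and can only fail to be independent) through polynomial elements, which they do not contain. The only difference is presentational: the paper treats $k=2$ and appeals to induction, while you handle general $k$ directly and make explicit, via the principal parts at each $\beta_j$, the directness argument that the paper leaves implicit.
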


\begin{proof}
  Let $\beta_1\neq \beta_2\in \mathbb
  C$ and $m_1$, $m_2 \in \mathbb Z_0^-$. Then for any $M_1$, $M_2$
  \begin{equation*}
    \mathcal{R}_{{\mathcal A},\beta_1}^{m_1,M_1} \cap \mathcal{R}_{{\mathcal A},\beta_2}^{m_2,M_2} =
    (\mathcal{X}_{{\mathcal A},\beta_1}^{m_1,M_1} \oplus \mathcal{P}_{{\mathcal A}}^{M_1})\cap (\mathcal{X}_{{\mathcal A},\beta_2}^{m_2,M_2} \oplus \mathcal{P}_{{\mathcal A}}^{M_2})
    = \mathcal{P}^{\min\{M_1,M_2\}}
  \end{equation*}
  because the intersection of $\mathcal{X}_{{\mathcal A},\beta_1}^{m_1,M_1}$ and
  $\mathcal{X}_{{\mathcal A},\beta_2}^{m_2,M_2}$ contains only $0$. Moreover,
  \begin{equation*}
    \mathcal{R}_{{\mathcal A},\beta_1}^{m_1,M_1} + \mathcal{R}_{{\mathcal A},\beta_2}^{m_2,M_2} =
    \mathcal{X}_{{\mathcal A},\beta_1}^{m_1,M_1} \oplus \mathcal{X}_{{\mathcal A},\beta_2}^{m_2,M_2} \oplus \mathcal{P}_{{\mathcal A}}^{\max\{M_1,M_2\}}.
  \end{equation*}
  This proves the theorem for $k = 2$. For general $k$, an inductive proof along similar lines is possible.
\end{proof}

\begin{theorem}
  \label{th:decomposition-curve}
  Given a rational PH curve $\mathbf r = \frac{-2{\mathbf b}}{\alpha}$ with a vectorial polynomial ${\mathbf b}$ and a polynomial $\alpha = \prod_{i=1}^k (t-\beta_i)^{n_i}$ we have
  \begin{equation*}
    \mathbf r \in \biggl( \bigoplus_{i=1}^k \mathcal{X}_{{\mathcal A},\beta_i}^{m_i,2a} \biggr) \oplus \mathcal{P}_{{\mathcal A}}^{\deg {\mathbf b} - \deg \alpha}
  \end{equation*}
  where $m_i = -n_i$.
\end{theorem}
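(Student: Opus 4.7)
The plan is to execute a partial-fraction style decomposition adapted to the PH setting. For each root $\beta_i$ of $\alpha$ I first invoke Lemma~\ref{lem:finL} to obtain $\tilde{\mathbf{r}}_i \in \mathcal{R}_{\mathcal{A},\beta_i}^{-n_i,2a}$ whose Laurent coefficients at $\beta_i$ agree with those of $\mathbf{r}$ for exponents $-n_i,\ldots,0$. Setting $\mathbf{p} := \mathbf{r} - \sum_{i=1}^k \tilde{\mathbf{r}}_i$, the principal part of $\mathbf{r}$ at each $\beta_j$ is exactly cancelled by that of $\tilde{\mathbf{r}}_j$, while the remaining $\tilde{\mathbf{r}}_i$ with $i\ne j$ are holomorphic at $\beta_j$. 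Hence $\mathbf{p}$ has no poles and is a polynomial; as a linear combination of PH curves it is a polynomial PH curve. Applying Theorem~\ref{th:canonical-decomposition} I further split $\tilde{\mathbf{r}}_i = \mathbf{x}_i + \mathbf{c}_i$ with $\mathbf{x}_i \in \mathcal{X}_{\mathcal{A},\beta_i}^{-n_i,2a}$ and $\mathbf{c}_i \in \mathcal{P}^{2a}_{\mathcal{A}}$; Lemma~\ref{lem:polPH} gives $\mathcal{P}^{2a}_{\mathcal{A}}=\R^3$, so each $\mathbf{c}_i$ is a constant vector. Collecting terms yields the candidate decomposition
\begin{equation*}
\mathbf{r} \;=\; \sum_{i=1}^k \mathbf{x}_i \;+\; \mathbf{P}, \qquad \mathbf{P} := \mathbf{p} + \sum_{i=1}^k \mathbf{c}_i,
\end{equation*}
in which $\mathbf{P}$ is again a polynomial PH curve.

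What remains is to verify the degree bound $\mathbf{P} \in \mathcal{P}^{N-n}_{\mathcal{A}}$ with $N := \deg\mathbf{b}$, $n := \deg\alpha$. Comparing behavior at infinity, $\mathbf{r}(t)$ grows like $t^{N-n}$, while each $\mathbf{x}_i(t)$, being a finite Laurent polynomial in $(t-\beta_i)$ with top exponent at most $2a$, grows at most like $t^{2a}$. Therefore $\deg\mathbf{P}\leq\max(N-n,2a)$. In the range $N-n\geq 2a$ this already yields the required bound. For $N-n<2a$ the decisive ingredient is the degree gap of Lemma~\ref{lem:polPH}: every polynomial PH curve is either constant or has degree at least $2a+1$. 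Combined with $\deg\mathbf{P}\leq 2a$ this forces $\mathbf{P}$ to be a constant vector, and for $N-n\geq 0$ constants already lie in $\mathcal{P}^{N-n}_{\mathcal{A}}$.

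The hard part will be the proper case $N<n$, where $\mathcal{P}^{N-n}_{\mathcal{A}}=\{0\}$ demands $\mathbf{P}=0$ rather than merely constant. Here $\mathbf{r}(t)\to 0$ as $t\to\infty$, so the decomposition gives $\mathbf{P}=-\lim_{t\to\infty}\sum_i\mathbf{x}_i(t)$; the cancellation of the top $t$-powers of the $\mathbf{x}_i$ (which is forced by finiteness of this limit) leaves only a residual constant that we need to identify with zero. To close this step I plan to invoke the uniqueness of the direct sum $\bigoplus_i\mathcal{X}_{\mathcal{A},\beta_i}^{-n_i,2a}\oplus\mathcal{P}^{2a}_{\mathcal{A}}$ established by Theorem~\ref{th:decomposition-multiple-roots} (applied with $M_i=2a$), together with the observation that the projection onto $\mathcal{P}^{2a}_{\mathcal{A}}=\R^3$ is a linear functional on this direct sum which vanishes on the subspace of proper rational PH curves, i.e.\ those with $\deg\mathbf{b}<\deg\alpha$. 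Matching this functional against the coefficient of $t^0$ in the polynomial part at infinity and exploiting $\mathbf{r}(t)\to 0$ then pins $\mathbf{P}$ to zero. This limit-at-infinity step is conceptually short but technically delicate, and I expect it to absorb most of the proof's effort.
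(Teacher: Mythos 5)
Your proposal follows the paper's proof essentially step for step: Lemma~\ref{lem:finL} produces the single-pole pieces, subtracting them leaves a pole-free (hence polynomial) PH curve, and the degree gap of Lemma~\ref{lem:polPH} downgrades that polynomial to a constant when its degree is at most $2a$. The only cosmetic difference is that you bound $\deg\mathbf{P}$ by comparing growth at infinity, whereas the paper puts $\mathbf{r}-\sum_i\mathbf{s}_i$ over the common denominator $\alpha$ and counts numerator degrees; the two computations give the same bound $\max\{\deg\mathbf{b}-\deg\alpha,\,2a\}$. Where you go beyond the paper is the case $\deg\mathbf{b}<\deg\alpha$, in which $\mathcal{P}_{\mathcal{A}}^{\deg\mathbf{b}-\deg\alpha}=\{0\}$ and the constant $\mathbf{P}$ must actually vanish. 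Be aware that (i) the paper's own proof is silent on this case (it stops at ``must be constant''), and (ii) your proposed closing move is circular as stated: the ``observation'' that the projection onto $\mathcal{P}_{\mathcal{A}}^{2a}=\R^3$ vanishes on curves with $\deg\mathbf{b}<\deg\alpha$ \emph{is} the assertion $\mathbf{P}=0$ that you are trying to prove, and the limit $\mathbf{r}(t)\to 0$ alone does not deliver it, because the $\mathbf{x}_i$ are normalized to have vanishing constant Laurent coefficient at their respective poles $\beta_i$, not vanishing constant term in the expansion at infinity. So either supply an independent argument for that case or restrict the claim as the paper implicitly does; for the range the paper actually argues, your proof is correct and identical in substance.
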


\begin{proof}
  By Lemma \ref{lem:finL} there exists for each $i=1, 2, \ldots, k$ a PH curve $\mathbf {\mathbf s}_i\in \mathcal{X}_{{\mathcal A},\beta_i}^{m_i,2a}$ having the same negative index Laurent coefficients as $\mathbf{r}$ with respect to $(t-\beta_i)$. For this reason
  \begin{equation}
    \label{eq:16}
    \mathbf p:=\mathbf r-(\mathbf s_1+\ldots+\mathbf s_k)
  \end{equation}
  has no singular values and therefore is polynomial. In order to determine its degree, let us put the right-hand side of \eqref{eq:16} on the common denominator $\alpha$ and consider its numerator. The components $\mathbf s_i$ will contribute by the degree at most $2a+\deg \alpha$ and $\mathbf r$ by the degree $\deg {\mathbf b}$. If $\deg {\mathbf b} \le 2a + \deg \alpha$ then $\mathbf p$ is of degree at most $2a$ and must be constant due to Lemma \ref{lem:polPH}. If on the other hand $\deg {\mathbf b} > 2a + \deg \alpha$ the degree of $\mathbf p$ is at most $(\deg {\mathbf b} - \deg \alpha)$.
\end{proof}

We illustrate Theorem~\ref{th:decomposition-curve} at hand of an example that also shows how to handle irreducible (over $\R$) factors of $\alpha$ if one wishes to avoid complex numbers.

\begin{example}
  \label{ex:decomposition-curve}
  We consider rational PH curve
  \begin{multline*}
    {\mathbf r}= \frac{1}{\alpha}
    \Bigl(
    \begin{psmallmatrix} -0.0510 \\ -0.0004 \\ -0.0050 \end{psmallmatrix} +
\begin{psmallmatrix} -0.1000 \\ -0.0017 \\ -0.0200 \end{psmallmatrix}t +
\begin{psmallmatrix} -0.1000 \\ -0.0017 \\ -0.0783 \end{psmallmatrix}t^{2} +
\begin{psmallmatrix} -0.1000 \\ 0.0056 \\ -0.1783 \end{psmallmatrix}t^{3} +
\begin{psmallmatrix} -0.1000 \\ 0.0297 \\ -0.2758 \end{psmallmatrix}t^{4}\\ +
\begin{psmallmatrix} -0.1000 \\ 0.0849 \\ -0.2620 \end{psmallmatrix}t^{5} +
\begin{psmallmatrix} -0.1000 \\ 0.2006 \\ -0.1327 \end{psmallmatrix}t^{6} +
\begin{psmallmatrix} -31.4912 \\ 0.4749 \\ 0.0822 \end{psmallmatrix}t^{7} +
\begin{psmallmatrix} -13.7803 \\ 0.0853 \\ 18.0038 \end{psmallmatrix}t^{8} +
\begin{psmallmatrix} -17.9690 \\ -1.1335 \\ 0.2412 \end{psmallmatrix}t^{9} +
\begin{psmallmatrix} -19.9080 \\ -0.9355 \\ 3.0462 \end{psmallmatrix}t^{10}\\ +
\begin{psmallmatrix} -6.7257 \\ -1.2111 \\ -9.9858 \end{psmallmatrix}t^{11} +
\begin{psmallmatrix} -25.0542 \\ -0.2798 \\ -18.3100 \end{psmallmatrix}t^{12} +
\begin{psmallmatrix} -23.4789 \\ 0.5064 \\ -9.5939 \end{psmallmatrix}t^{13} +
\begin{psmallmatrix} -16.7710 \\ 0.5126 \\ -7.7792 \end{psmallmatrix}t^{14} +
\begin{psmallmatrix} -14.4895 \\ 0.5456 \\ -2.5465 \end{psmallmatrix}t^{15} +
\begin{psmallmatrix} -4.5060 \\ 0.2670 \\ 1.4686 \end{psmallmatrix}t^{16}
     \Bigr)
  \end{multline*}
  to the polynomial $\mathcal{A}$ of Example~\ref{ex:single-root} and the denominator polynomial $\alpha = (t+1)^4(t^2+1)^3 = (t+1)^4(t-\mathrm{i})^3(t+\mathrm{i})^3$. According to Theorem~\ref{th:decomposition-curve}, it is contained in the direct sum of spaces
  \begin{equation*}
    \mathcal{X}_{{\mathcal A},-1}^{-4,4} \oplus
    \mathcal{X}_{{\mathcal A},\mathrm{i}}^{-3,4} \oplus
    \mathcal{X}_{{\mathcal A},-\mathrm{i}}^{-3,4} \oplus
    \mathcal{P}_{{\mathcal A}}^6.
  \end{equation*}
  The space $\mathcal{X}_{{\mathcal A},-1}^{-4,4} = {\mathcal Q}_{{\mathcal A},-1}^{-4,3} \oplus
  {\mathcal Q}_{{\mathcal A},-1}^{-3,4}$ is of dimension two. Two basis vectors, normalized
  according to Equation~\eqref{eq:6} and Lemma~\ref{lem:qbasis}, are
  \begin{multline*}
    {\mathbf q}_{{\mathcal A},-1}^{-4} = \begin{psmallmatrix}650.0000\\1680.0000\\4200.0000\end{psmallmatrix}(t+1)^{-4}+\begin{psmallmatrix}227.7112\\-7935.7618\\-12292.7380\end{psmallmatrix}(t+1)^{-3}+\begin{psmallmatrix}-2543.7505\\13075.4877\\12336.2084\end{psmallmatrix}(t+1)^{-2}\\+\begin{psmallmatrix}2459.4830\\-9040.4230\\-4284.7317\end{psmallmatrix}(t+1)^{-1}+\begin{psmallmatrix}-15.0255\\151.7744\\250.1058\end{psmallmatrix}(t+1)+\begin{psmallmatrix}17.8045\\-23.2667\\48.4685\end{psmallmatrix}(t+1)^{2}+\begin{psmallmatrix}7.6851\\-27.3247\\-12.5238\end{psmallmatrix}(t+1)^{3}     \in {\mathcal Q}_{{\mathcal A},-1}^{-4,3}
  \end{multline*}
  and
  \begin{multline*}
    {\mathbf q}_{{\mathcal A},-1}^{-3} = \begin{psmallmatrix}650.0000\\1680.0000\\4200.0000\end{psmallmatrix}(t+1)^{-3}+\begin{psmallmatrix}1028.8024\\-6930.7877\\-8836.9694\end{psmallmatrix}(t+1)^{-2}+\begin{psmallmatrix}-2208.9669\\8567.1053\\4344.5983\end{psmallmatrix}(t+1)^{-1}\\+\begin{psmallmatrix}1172.7946\\2527.6805\\6710.7252\end{psmallmatrix}(t+1)+\begin{psmallmatrix}305.1473\\-4629.6686\\-6862.3617\end{psmallmatrix}(t+1)^{2}+\begin{psmallmatrix}-612.2241\\2925.2016\\2446.0331\end{psmallmatrix}(t+1)^{3}+\begin{psmallmatrix}161.6075\\-574.6046\\-263.3604\end{psmallmatrix}(t+1)^{4}     \in {\mathcal Q}_{{\mathcal A},-1}^{-3,4}.
  \end{multline*}
  The spaces $\mathcal{X}_{{\mathcal A},\pm\mathrm{i}}^{-3,4} =
  {\mathcal Q}_{{\mathcal A},\pm\mathrm{i}}^{-3,4}$ are of dimension one. Basis vectors are
  \begin{multline*}
    {\mathbf q}_{{\mathcal A},\pm \mathrm{i}}^{-3} = \begin{psmallmatrix}-590.0000\mp480.0000\mathrm{i}\\1080.0000\pm1320.0000\mathrm{i}\\-1120.0000\pm1560.0000\mathrm{i}\end{psmallmatrix}(t\mp\mathrm{i})^{-3}+\begin{psmallmatrix}-350.3232\pm1055.6869\mathrm{i}\\3323.1420\mp2332.9210\mathrm{i}\\3307.6750\pm1083.3722\mathrm{i}\end{psmallmatrix}(t\mp\mathrm{i})^{-2}\\+\begin{psmallmatrix}588.4201\pm1072.8139\mathrm{i}\\-1838.5153\mp2746.0136\mathrm{i}\\-890.3356\mp804.3343\mathrm{i}\end{psmallmatrix}(t\mp\mathrm{i})^{-1}+\begin{psmallmatrix}-2040.5475\pm2895.1889\mathrm{i}\\5562.8972\mp5788.0593\mathrm{i}\\7858.8067\pm4083.6954\mathrm{i}\end{psmallmatrix}(t\mp\mathrm{i})+\begin{psmallmatrix}3288.5986\pm526.4819\mathrm{i}\\-7475.2970\mp6041.6541\mathrm{i}\\1723.5378\mp9286.0678\mathrm{i}\end{psmallmatrix}(t\mp\mathrm{i})^{2}\\+\begin{psmallmatrix}347.1064\mp1476.0170\mathrm{i}\\-2866.5767\pm4705.3719\mathrm{i}\\-3724.7627\pm1355.1343\mathrm{i}\end{psmallmatrix}(t\mp\mathrm{i})^{3}+\begin{psmallmatrix}-352.4984\mp117.1860\mathrm{i}\\1253.3277\pm416.6614\mathrm{i}\\574.4419\pm190.9698\mathrm{i}\end{psmallmatrix}(t\mp\mathrm{i})^{4} .
  \end{multline*}
  Since we are interested in real solutions, we replace them by the \emph{real} basis vectors ${\mathbf a}_{\pm\mathrm{i}} \coloneqq \frac{1}{2}({\mathbf q}_{{\mathcal A},+\mathrm{i}}^{-3} + {\mathbf q}_{{\mathcal A},- \mathrm{i}}^{-3})$ and $\mathbf{b}_{\pm\mathrm{i}} \coloneqq \frac{\mathrm{i}}{2}({\mathbf q}_{{\mathcal A},+\mathrm{i}}^{-3} - {\mathbf q}_{{\mathcal A},- \mathrm{i}}^{-3})$ of $\mathcal{X}_{{\mathcal A},\mathrm{i}}^{-3,4} + \mathcal{X}_{{\mathcal A},-\mathrm{i}}^{-3,4}$. Finally, we construct a basis of $\mathcal{P}_{{\mathcal A}}^6$ consisting of three constant vectors, for example $\mathbf{x} = (1,0,0)$, $\mathbf{y} = (0,1,0)$, and $\mathbf{z} = (0,0,1)$, plus two more vectors which can be computed in the standard way by integration:
  \begin{multline*}
    {\mathbf p}_5 \coloneqq
    \int {\mathcal A}\qi\Cj{\mathcal A} \,\mathrm{d}t =
    \begin{psmallmatrix}100\\0\\0\end{psmallmatrix}t+\begin{psmallmatrix}-220\\120\\-160\end{psmallmatrix}t^{2}+\begin{psmallmatrix}140\\-40\\520\end{psmallmatrix}t^{3}+\begin{psmallmatrix}10\\-270\\-470\end{psmallmatrix}t^{4}+\begin{psmallmatrix}-54\\192\\88\end{psmallmatrix}t^{5}     \in \mathcal{P}_{{\mathcal A}}^5 \subseteq \mathcal{P}_{{\mathcal A}}^6
  \end{multline*}
  and
  \begin{multline*}
    {\mathbf p}_6 \coloneqq
    2\int t{\mathcal A}\qi\Cj{\mathcal A} \,\mathrm{d}t =
    \begin{psmallmatrix}100\\0\\0\end{psmallmatrix}t^{2}+\begin{psmallmatrix}-880/3\\160\\-640/3\end{psmallmatrix}t^{3}+\begin{psmallmatrix}210\\-60\\780\end{psmallmatrix}t^{4}+\begin{psmallmatrix}16\\-432\\-752\end{psmallmatrix}t^{5}+\begin{psmallmatrix}-90\\320\\440/3\end{psmallmatrix}t^{6}     \in \mathcal{P}_{{\mathcal A}}^6.
  \end{multline*}
  (The factor $2$ here has the purpose to normalize ${\mathbf p}_6$ in the sense of Lemma~\ref{lem:qbasis} and with respect to $\beta = 0$.) Now, Theorem~\ref{th:decomposition-curve} predicts existence of unique coefficients $\sigma_1$, $\sigma_2$, \ldots, $\sigma_9$ such that ${\mathbf r}= \sigma_1 {\mathbf q}_{{\mathcal A},-1}^{-4} + \sigma_2 {\mathbf q}_{{\mathcal A},-1}^{-3} + \sigma_3 {\mathbf a}_{\pm\mathrm{i}} + \sigma_4 \mathbf{b}_{\pm\mathrm{i}} + \sigma_5 \mathbf{x} + \sigma_6 \mathbf{y} + \sigma_7 \mathbf{z} + \sigma_8 {\mathbf p}_5 + \sigma_9 {\mathbf p}_6$. Indeed, we are able to compute
  \begin{gather*}
    \sigma_{1} = -0.023272,\quad \sigma_{2} = -0.010124,\quad \sigma_{3} = -0.000932,\quad \sigma_{4} = -0.000211,\quad \\\sigma_{5} = 0.000031,\quad \sigma_{6} = 0.000542,\quad \sigma_{7} = 0.010013,\quad \sigma_{8} = 1.523802,\quad \sigma_{9} = 4.169569 .
  \end{gather*}
\end{example}

Theorem~\ref{th:decomposition-curve} allows us to obtain a partial fraction decomposition for rational PH curves. The classical partial fraction decomposition is defined for rational functions but it easily lifts to rational parametrizations. One possible formulation over the complex numbers $\C$ is

\begin{theorem}[Partial Fraction Decomposition]
  For a given rational parametric curve ${\mathbf r}= \frac{-2{\mathbf b}}{\alpha}$ with $\alpha = \prod_{i=1}^k (t-\beta_i)^{n_i}$ there exists a unique polynomial $\mathbf{p}$ and unique rational parametric curves
  \begin{equation*}
    {\mathbf s}_i = \frac{{\mathbf p}_i}{(t-\beta_i)^{n_i}}
    \quad\text{with $\mathbf{p}_i \in \R[t]$, $\deg {\mathbf p}_i < n_i$}
    \quad\text{for}\quad
    i \in \{1, 2, \ldots, k\}
  \end{equation*}
  such that ${\mathbf r}= {\mathbf p}+ \sum_{i=1}^k {\mathbf s}_i$.
\end{theorem}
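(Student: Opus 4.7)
The plan is to deduce this partial fraction decomposition directly from Theorem~\ref{th:decomposition-curve}, using only the elementary fact that a Laurent tail with finitely many nonzero negative-index coefficients at $\beta_i$ can be collapsed onto a single rational summand with denominator $(t-\beta_i)^{n_i}$.

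First I would apply Theorem~\ref{th:decomposition-curve} with $m_i = -n_i$ to write
\[
\mathbf{r} = \mathbf{p}' + \sum_{i=1}^k \mathbf{q}_i, \quad \mathbf{q}_i \in \mathcal{X}_{\mathcal{A},\beta_i}^{-n_i,2a}, \quad \mathbf{p}' \in \mathcal{P}_{\mathcal{A}}^{\deg \mathbf{b} - \deg\alpha},
\]
where the sum is direct. Each $\mathbf{q}_i$ has a finite Laurent expansion $\mathbf{q}_i = \sum_{j=-n_i}^{2a} \mathbf{q}_{i,j}(t-\beta_i)^j$. Splitting this expansion into its principal part (coefficients at $j < 0$) and its regular part (coefficients at $j \ge 0$) gives $\mathbf{q}_i = \mathbf{p}_i(t)/(t-\beta_i)^{n_i} + \widetilde{\mathbf{p}}_i(t)$, where $\mathbf{p}_i$ is obtained by multiplying the principal part through by $(t-\beta_i)^{n_i}$, hence has degree at most $n_i - 1$, and $\widetilde{\mathbf{p}}_i$ is a polynomial of degree at most $2a$. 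Adding all the $\widetilde{\mathbf{p}}_i$ to $\mathbf{p}'$ produces the global polynomial part $\mathbf{p}$, and the $\mathbf{p}_i/(t-\beta_i)^{n_i}$ are the claimed summands $\mathbf{s}_i$.

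For uniqueness I would argue that the principal part at each $\beta_i$ is intrinsic to $\mathbf{r}$, being read off from the unique Laurent expansion of $\mathbf{r}$ at $\beta_i$, and this pins down every $\mathbf{p}_i$; once the $\mathbf{s}_i$ are fixed, $\mathbf{p} = \mathbf{r} - \sum_i \mathbf{s}_i$ is determined. The main obstacle is essentially cosmetic: the statement is the classical partial fraction decomposition lifted to a vector-valued rational function, so no genuinely new algebraic difficulty is expected. The only care required is bookkeeping the degree bound $\deg \mathbf{p}_i < n_i$, which falls out of the observation that the principal part contains exactly $n_i$ negative-index terms, together with verifying that the argument is blind to the PH hypothesis so that it indeed applies to any rational parametric curve (as the statement suggests) by the same componentwise reduction to the classical scalar result.
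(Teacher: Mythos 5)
Your primary route has a genuine gap of scope. The theorem is asserted for an \emph{arbitrary} rational parametric curve $\mathbf{r}=\frac{-2\mathbf{b}}{\alpha}$ --- there is no Pythagorean-hodograph hypothesis in the statement --- whereas Theorem~\ref{th:decomposition-curve}, which you invoke in your very first step, applies only to rational PH solution curves: its conclusion places $\mathbf{r}$ in a direct sum of the spaces $\mathcal{X}_{\mathcal{A},\beta_i}^{m_i,2a}$ and $\mathcal{P}_{\mathcal{A}}^{\deg\mathbf{b}-\deg\alpha}$, all of whose elements satisfy \eqref{eq:1}. A general rational curve does not lie in that span, so the decomposition $\mathbf{r}=\mathbf{p}'+\sum_i\mathbf{q}_i$ from which you start simply does not exist in the stated generality. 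The paper in fact offers no proof of this theorem: it regards it as the classical scalar partial fraction decomposition applied to each of the three coordinate functions of $\mathbf{r}$ and collected into vector form (``it easily lifts to rational parametrizations''). That componentwise reduction, which you mention only as a closing aside, is the entire intended argument and should be the body of your proof, not a verification appended to a derivation that presupposes the PH property.

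Within the PH setting your mechanism is sound and is essentially how the paper passes from Theorem~\ref{th:decomposition-curve} to Corollary~\ref{cor:PFD}: split each single-pole summand into principal and regular Laurent parts, absorb the regular parts into the polynomial, and obtain uniqueness from the fact that the principal part of $\mathbf{r}$ at each $\beta_i$ is intrinsic. Note, however, that truncating to the bare principal part so as to meet $\deg\mathbf{p}_i<n_i$ generally destroys the PH property of the individual summands $\mathbf{s}_i$ --- harmless for the present statement, which asks nothing of the $\mathbf{s}_i$ beyond their form, but exactly the reason the corollary relaxes the degree bound. This is a further sign that the theorem at hand is the elementary classical fact and not a consequence of the paper's structural results.
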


Note that it is the requirement $\deg {\mathbf p}_i < n_i$ that makes the decomposition unique. As an immediate corollary to Theorem~\ref{th:decomposition-curve}, and still rather surprisingly, this degree constraint can be substantially lifted when we require that $\mathbf{r}$ and all partial fractions are PH curves to the same tangent indicatrix. For our formulation we assume, as usual, that $\mathbf{F} = \mathcal{A} \qi \Cj{\mathcal{A}}$ is fixed and we refer to rational or polynomial curves that solve \eqref{eq:1} simply as ``PH solution curves''.

\begin{corollary}[Partial Fraction Decomposition for Rational PH Curves]
  \label{cor:PFD}
  For any rational PH solution curve ${\mathbf r}= \frac{-2{\mathbf b}}{\alpha}$ with $\alpha = \prod_{i=1}^k (t-\beta_i)^{n_i}$, there exist a unique polynomial PH solution curve $\mathbf{p}$ and unique rational PH solution curves
  \begin{equation*}
    {\mathbf s}_i = \frac{{\mathbf p}_i}{(t-\beta_i)^{n_i}},
    \quad\text{with $\mathbf{p}_i \in \R[t]$, $\deg {\mathbf p}_i \le \deg {\mathbf F}$}
    \quad\text{for}\quad
    i \in \{1, 2, \ldots, k\},
  \end{equation*}
  such that ${\mathbf r}= {\mathbf p}+ \sum_{i=1}^k {\mathbf s}_i$.
\end{corollary}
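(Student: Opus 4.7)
The plan is to deduce this corollary directly from Theorem~\ref{th:decomposition-curve} applied to $\mathbf{r} = -2\mathbf{b}/\alpha$ with the complete factorization $\alpha = \prod_{i=1}^k (t-\beta_i)^{n_i}$. That theorem already provides a decomposition
\[
  \mathbf{r} = \mathbf{p} + \sum_{i=1}^k \mathbf{s}_i, \qquad \mathbf{s}_i \in \mathcal{X}_{\mathcal{A},\beta_i}^{-n_i,2a}, \qquad \mathbf{p} \in \mathcal{P}_\mathcal{A}^{\deg \mathbf{b} - \deg \alpha},
\]
in which the sum is direct, so uniqueness of $\mathbf{p}$ and of every $\mathbf{s}_i$ is automatic, and the only remaining task is to rewrite each $\mathbf{s}_i$ in the claimed partial fraction form.

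By Definition~\ref{def:Xspace} together with the explicit formula for $M_0$ in Lemma~\ref{lem:M0}, any $\mathbf{s}_i \in \mathcal{X}_{\mathcal{A},\beta_i}^{-n_i,2a}$ has a Laurent expansion at $\beta_i$ whose indices range in a finite interval bounded below by $-n_i$ and above by $M_0(-3) = 2a = \deg\mathbf{F}$. Multiplying $\mathbf{s}_i$ by $(t - \beta_i)^{n_i}$ therefore clears the pole and returns a polynomial $\mathbf{p}_i$ with $\mathbf{s}_i = \mathbf{p}_i/(t-\beta_i)^{n_i}$, whose degree is controlled by the upper Laurent index and so falls within the stated bound. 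The PH property of this partial fraction is inherited directly from membership of $\mathbf{s}_i$ in $\mathcal{X}_{\mathcal{A},\beta_i}^{-n_i,2a}$.

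Finally, one must verify that the $\mathbf{p}_i$ can be chosen real. If $\beta_i \in \mathbb{R}$, then $\mathcal{X}_{\mathcal{A},\beta_i}^{-n_i,2a}$ consists of real solutions and $\mathbf{p}_i$ is automatically real. If $\beta_i \in \mathbb{C} \setminus \mathbb{R}$, then the conjugate root $\bar{\beta_i}$ appears in $\alpha$ with the same multiplicity, and reality of $\mathbf{r}$ combined with uniqueness of the decomposition forces the summands $\mathbf{s}_i$ and $\mathbf{s}_{\bar{\imath}}$ at $\beta_i$ and $\bar{\beta_i}$ to be complex conjugates; they then package into a single real partial fraction, as already illustrated in Example~\ref{ex:decomposition-curve}. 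The main step requiring care is precisely this bookkeeping --- translating the Laurent support into the explicit degree bound on $\mathbf{p}_i$ and pairing complex conjugate factors of $\alpha$ --- but no new structural argument beyond Theorem~\ref{th:decomposition-curve} is required.
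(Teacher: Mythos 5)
Your overall route coincides with the paper's: Corollary~\ref{cor:PFD} is presented there as an immediate consequence of Theorem~\ref{th:decomposition-curve} with no further argument, and you correctly observe that the directness of the sum in that theorem yields existence and uniqueness of the components in one stroke. Structurally, then, there is nothing to object to.

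However, the one step you explicitly defer --- ``translating the Laurent support into the explicit degree bound on $\mathbf{p}_i$'' --- is exactly where the argument does not close as written. An element $\mathbf{s}_i \in \mathcal{X}_{{\mathcal A},\beta_i}^{-n_i,2a}$ has Laurent support contained in $[-n_i,\,2a]$, and the top index $M_0(-3)=2a$ is attained in general; clearing the pole therefore gives $\mathbf{p}_i=(t-\beta_i)^{n_i}\mathbf{s}_i$ of degree up to $2a+n_i=\deg\mathbf{F}+n_i$, not $\deg\mathbf{F}$. This slack cannot be removed: two admissible choices of $\mathbf{s}_i$ differ by a polynomial PH solution curve, and by Lemma~\ref{lem:polPH} the only such curves of degree at most $2a$ are constants, which cannot cancel the top coefficients of $\mathbf{p}_i$. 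A concrete witness is ${\mathbf q}_{{\mathcal A},-10}^{-4}$ of Example~\ref{ex:single-root}, with Laurent support $[-4,3]$ and $2a=4$, whose numerator over $(t+10)^{4}$ has degree $7>4=\deg\mathbf{F}$. So you should either record the bound your construction actually delivers, $\deg\mathbf{p}_i\le\deg\mathbf{F}+n_i$ (which is evidently what the comparison with the classical constraint $\deg\mathbf{p}_i<n_i$ is meant to convey), or explain why the printed bound holds --- which it does not. Two smaller points of the same kind: for non-real $\beta_i$ the numerator $(t-\beta_i)^{n_i}\mathbf{s}_i$ is genuinely complex (cf.\ ${\mathbf q}_{{\mathcal A},\pm\mathrm{i}}^{-3}$ in Example~\ref{ex:decomposition-curve}), and your repackaging of conjugate pairs produces a single real fraction with denominator $\bigl((t-\beta_i)(t-\bar{\beta}_i)\bigr)^{n_i}$, i.e.\ a decomposition of a different form than the one claimed; and the uniqueness you call ``automatic'' holds for the normalized components of $\mathcal{X}_{{\mathcal A},\beta_i}^{-n_i,2a}$, whereas without the normalization $\mathbf{r}_0=0$ each $\mathbf{s}_i$ can be shifted by a constant that is absorbed into $\mathbf{p}$, so the normalization must be invoked explicitly.
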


Our final theorem gives a bound for the degree of a rational PH curve in a solution space that is decomposed similarly to Theorem~\ref{th:decomposition-curve}.

\begin{theorem}
  Consider $\mathbf{r} \in \bigoplus_{i=1}^k \mathcal{X}_{{\mathcal A},\beta_i}^{m_i,M_i} \oplus \mathcal{P}_{{\mathcal A}}^N$ with $m_1$, $m_2$, \ldots, $m_k < 0$ and let $\alpha = \prod_{i=1}^k (t-\beta_i)^{n_i}$ with $n_i = -m_i$. Then there exists a vector valued polynomial ${\mathbf b}$ of degree at most $ n_1 + n_2 + \cdots + n_k + \max\{N, 2a\}$ such that ${\mathbf r}= \frac{-2{\mathbf b}}{\alpha}$.
\end{theorem}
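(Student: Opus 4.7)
The plan is to exploit the given direct-sum decomposition of $\mathbf{r}$ and then to bound the numerator degree when each summand is brought onto the common denominator $\alpha$. I would first write $\mathbf{r} = \mathbf{p} + \sum_{i=1}^k \mathbf{s}_i$ with $\mathbf{p} \in \mathcal{P}_{\mathcal A}^N$ (hence $\deg\mathbf{p}\le N$) and $\mathbf{s}_i \in \mathcal{X}_{\mathcal A,\beta_i}^{m_i,M_i}$. By Definition~\ref{def:Xspace} each $\mathbf{s}_i$ is a linear combination of basis curves $\mathbf{q}_{\mathcal A,\beta_i}^{\ell}$ over integers $\ell$ with $m_i \le \ell \le -3$ and $M_0(\ell) \le M_i$, and each such basis curve is a finite Laurent polynomial in $(t-\beta_i)$ supported in powers $[\ell, M_0(\ell)]$.

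The crucial numeric input is the uniform upper bound $M_0(\ell) \le 2a$ for every $\ell \le -3$, which I would read off from Lemma~\ref{lem:M0}: in the range $-2a \le \ell \le -2$ one has $M_0(\ell) = 2a+\ell+3 \le 2a$ as soon as $\ell \le -3$, while for $\ell < -2a$ one has $M_0(\ell) = 2a+\ell < 2a$. Consequently the Laurent expansion of $\mathbf{s}_i$ at $\beta_i$ is supported in powers from $-n_i$ up to $2a$, so
\begin{equation*}
  \mathbf{s}_i = \frac{\mathbf{b}_i(t)}{(t-\beta_i)^{n_i}}
\end{equation*}
for a vector-valued polynomial $\mathbf{b}_i$ with $\deg \mathbf{b}_i \le n_i + 2a$.

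With this in hand the rest is bookkeeping: put $\mathbf{r}$ over the common denominator $\alpha$,
\begin{equation*}
  \mathbf{r} = \frac{\mathbf{p}\,\alpha \;+\; \sum_{i=1}^k \mathbf{b}_i \prod_{j\ne i}(t-\beta_j)^{n_j}}{\alpha},
\end{equation*}
and estimate degrees: the term $\mathbf{p}\,\alpha$ has degree at most $N + \sum_j n_j$, whereas the $i$-th summand in the sum has degree at most $(n_i+2a) + \sum_{j\ne i} n_j = 2a + \sum_j n_j$. Hence the numerator has degree at most $\sum_j n_j + \max\{N,2a\}$; absorbing the constant $-1/2$ produces the desired $\mathbf{b}$.

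No single step is really an obstacle—everything follows mechanically once the direct-sum decomposition is unpacked—but the proof fails without the uniform bound $M_0(\ell)\le 2a$ for $\ell\le -3$. This is precisely why $\mathcal{X}$ was defined to start at indices $\le -3$ in Definition~\ref{def:Xspace}: the ``missing'' indices $-2$, $-1$, $0$ would contribute basis curves whose highest Laurent power could reach $2a+1$, and the clean bound $\max\{N,2a\}$ would no longer hold.
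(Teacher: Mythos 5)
Your proof is correct and follows essentially the same route as the paper's: decompose $\mathbf{r}$ into $\mathbf{p}$ plus single-root pieces $\mathbf{b}_i/(t-\beta_i)^{n_i}$ with $\deg\mathbf{b}_i \le n_i+2a$, then count degrees over the common denominator. The only difference is that you explicitly justify the bound $\deg\mathbf{b}_i\le n_i+2a$ via $M_0(\ell)\le 2a$ for $\ell\le -3$ from Lemma~\ref{lem:M0}, a step the paper dismisses as obvious; your observation that this is exactly why $\mathcal{X}$ excludes the indices $-2,-1,0$ is a correct and worthwhile clarification.
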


\begin{proof}
  Note again that the space $\mathcal{P}_{{\mathcal A}}^N$ consists only of
  constants if $N<2a+1$. Obviously we have
  \begin{equation*}
    {\mathbf r}=\sum_{i=1}^k\frac{{\mathbf b}_i}{(t-\beta_i)^{n_i}}+ {\mathbf p}
  \end{equation*}
  where ${\mathbf b}_i$ are polynomials of degree at most $n_i+2a$ and $\mathbf{p}$ is of degree at most $N$ (and in fact equal to $0$ if $N<2a+1$). The upper bound on the degree of ${\mathbf b}$ follows directly from writing this expression with the common denominator $\alpha$.
\end{proof}

\section{Proofs of Lemmas}
\label{sec:proofs}

The key element in our proofs is a careful analysis of the system of linear equations arising from \eqref{eq:4}. Let us fix the single root $\beta$ of the denominator polynomial $\alpha$ for the time being and let us denote the solution space of \eqref{eq:4} for the special case $\alpha = (t - \beta)^n$ and $\deg {\mathbf b} \le N$ by $\mathcal{B}_{{\mathcal A},\beta}^{n,N}$. For any ${\mathbf b} \in \mathcal{B}_{{\mathcal A},\beta}^{n,N}$, the curve ${\mathbf r}= \frac{-2{\mathbf b}}{(t-\beta)^n}$ is a rational PH solution curve.

Writing ${\mathbf b}$, $\mathbf{F} = \mathcal{A} \qi \Cj{\mathcal{A}}$, and $\mu$ in the basis $(1,t-\beta, (t-\beta)^2, \ldots)$ as ${\mathbf b} = \sum_{i=0}^\infty {\mathbf b}_i (t-\beta)^i$, $\mathbf{F} = \sum_{i=0}^\infty {\mathbf f}_i (t-\beta)^i$, and $\mu=\sum_{i=0}^\infty \mu_i(t-\beta)^i$ with ${\mathbf b}_i = 0$ for $i > N$, ${\mathbf f}_i = 0$ for $i > 2a$, and $\mu_i \neq 0$ for only finitely many indices, the system \eqref{eq:4} can be written in equivalent form as
\begin{equation}
  \label{eq:linear-system}
  (k-2n){\mathbf b}_{k-n} = \sum_{i=0}^{k-1} \mu_i {\mathbf f}_{k-i-1},\quad
  k \ge 1
\end{equation}
where ${\mathbf b}_{k-n} = 0$ for ${k-n} < 0$. This is a specialization of \cite[Equation~(9)]{kalkan22} to the denominator $(t-\beta)^n$. For $k$ fixed, we refer to \eqref{eq:linear-system} as the ``$k$-th equation''.

The system \eqref{eq:linear-system} is very structured. The first $n-1$ equations are of the triangular shape
\begin{equation}
  \label{eq:linear-system-A}
  \begin{aligned}
    0 &= \mu_0 {\mathbf f}_0, \\
    0 &= \mu_0 {\mathbf f}_1 + \mu_1 {\mathbf f}_0, \\
    \vdots & \\
    0 &= \mu_0{\mathbf f}_{n-2} + \cdots + \mu_{n-3} {\mathbf f}_1 + \mu_{n-2} {\mathbf f}_0.
  \end{aligned}
\end{equation}
The equations to $n \le k \le N+n$ read as
\begin{equation}
  \label{eq:linear-system-B}
  \begin{aligned}
    (-n) {\mathbf b}_0 &= \mu_0{\mathbf f}_{n-1} + \cdots + \mu_{n-2} {\mathbf f}_1 + \mu_{n-1} {\mathbf f}_0, \\
    (-n+1) {\mathbf b}_1 &= \mu_0{\mathbf f}_n + \cdots + \mu_{n-2} {\mathbf f}_2 + \mu_{n-1} {\mathbf f}_1 + \mu_n {\mathbf f}_0, \\
    \vdots & \\
    (-n+N) {\mathbf b}_N &= \mu_0{\mathbf f}_{n+N-1} + \cdots + \mu_{n+N-3} {\mathbf f}_2 + \mu_{n+N-2} {\mathbf f}_1 + \mu_{n+N-1} {\mathbf f}_0.
  \end{aligned}
\end{equation}
Finally, the remaining equations have again zeros on the left-hand side:
\begin{equation}
  \label{eq:linear-system-C}
  \begin{aligned}
    0 &= \mu_0{\mathbf f}_{n+N} + \cdots + \mu_{n+N-1} {\mathbf f}_1 + \mu_{n+N} {\mathbf f}_0, \\
    0 &= \mu_0{\mathbf f}_{n+N+1} + \cdots + \mu_{n+N-1} {\mathbf f}_2 + \mu_{n+N} {\mathbf f}_1 + \mu_{n+N+1} {\mathbf f}_0, \\
    \vdots &
  \end{aligned}
\end{equation}

By \eqref{eq:linear-system}, the left hand side of the $(2n)$-th equation is zero. We will call it the \emph{critical equation.} It appears in \eqref{eq:linear-system-B} if $N \le n$ and in \eqref{eq:linear-system-C} otherwise.

If $n = 1$, the block of equations \eqref{eq:linear-system-A} is empty. Otherwise these equations because of ${\mathbf f}_0={\mathbf F}(\beta)\neq 0$ are equivalent to
\begin{equation}
  \label{eq:17}
  \mu_0=\mu_1=\ldots=\mu_{n-2}=0.
\end{equation}

Let us now consider the equations \eqref{eq:linear-system-C}. Because both $\mu_i$ and ${\mathbf f}_i$ are zero for $i$ high enough, the $\ell$-th equation of \eqref{eq:linear-system-C} is satisfied automatically for $\ell$ high enough. Assume now that $k$ is the maximal number of an equation in \eqref{eq:linear-system-C} with non-zero right-hand side. It has only one non-zero term $\mu_{k-1-2a}{\mathbf f}_{2a}$ forcing $\mu_{k-1-2a}=0$. Decreasing the equation number further, we see that \eqref{eq:linear-system-C} is equivalent to
\begin{equation}
  \label{zeroMU}
  \mu_{n+N-2a}=\mu_{n+N-2a+1}=\mu_{n+N-2a+2}= \ldots =0,
\end{equation}
meaning in particular that all $\mu_i$ vanish if~$n+N\le 2a$.

Consequently the complete system of equations is reduced to \eqref{eq:linear-system-B} with possibly non-zero variables
\begin{equation}
  \label{freeMU}
  \mu_{n-1}, \mu_n, \ldots, \mu_{n+N-2a-1}.
\end{equation}
Clearly, a non-trivial solution requires $N\ge 2a$. The equations in \eqref{eq:linear-system-B} express ${\mathbf b}_i$ explicitly in terms of $\mu_j$ and the coefficients of $\mathbf{F}$. The only exception is possibly the critical equation which has the form
\begin{equation}
  \label{eq:critical}
  0\cdot {\mathbf b}_n=\mu_0{\mathbf f}_{2n-1} + \cdots + \mu_{2n-1} {\mathbf f}_0.
\end{equation}
From this we see that the system can now be solved in the following way: The variable ${\mathbf b}_n$ can be set freely because it appears with a zero coefficient. The remaining variables ${\mathbf b}_i$ are completely determined by the variables $\mu_j$. Variables $\mu_j$ appearing in \eqref{eq:17} and in \eqref{zeroMU} must be zero. The only condition on the remaining variables $\mu_j$ listed in \eqref{freeMU} is, possibly, the critical Equation~\eqref{eq:critical}.

\begin{proof}[Proof of Lemmas~\ref{lem:M0-existence}--\ref{lem:lowCoef}]
  The lemmas talk about the spaces ${\mathcal Q}_{{\mathcal A},\beta}^{m,M}$. Their statement are easily seen to be true in the case $m \ge 0$ because then all elements of ${\mathcal Q}_{{\mathcal A},\beta}^{m,M}$ are polynomials. Therefore, we assume $m < 0$.

  We wish to relate the spaces ${\mathcal Q}_{{\mathcal A},\beta}^{m,M}$ and $\mathcal{B}_{{\mathcal A},\beta}^{n,N}$. A PH curve $\mathbf{r}$ is an element of ${\mathcal Q}_{{\mathcal A},\beta}^{m,M}$ if and only if there exist ${\mathbf b} \in \mathcal{B}_{{\mathcal A},\beta}^{n,N}$ such that ${\mathbf r}= \frac{-2{\mathbf b}}{(1-\beta)^n}$ where $n = -m$, $N = M - m$, and ${\mathbf b}_n = 0$. Consequently, the spaces $\mathcal{B}_{{\mathcal A},\beta}^{n,N}$ and ${\mathcal Q}_{{\mathcal A},\beta}^{m,M}$ are of equal dimension and, in order to prove Lemma~\ref{lem:M0-existence}, we have to show existence of a value $N_0$ such that $\dim\mathcal{B}_{{\mathcal A},\beta}^{n,N} = 0$ for $N < N_0$ and $\dim\mathcal{B}_{{\mathcal A},\beta}^{n,N_0} = 1$. Let us distinguish two cases.

\begin{itemize}
\item If $n > 2a$ we set $N_0 = 2a$. For $N < N_0$, all solutions are trivial by \eqref{freeMU}. For $N = N_0$, however there is precisely one free variable $\mu_{n-1}$. It can be set to any value, providing a one-dimensional system of solutions because the critical equation does not appear in \eqref{eq:linear-system-B}.
\item If $n\leq 2a$ the first non-trivial solution can occur only for $N \ge 2a \ge n$. This places the critical $2n$-th equation into \eqref{eq:linear-system-B}. Let us study the precise form of this equation (including all free $\mu_i$) for increasing $N$:
  \begin{equation*}
    \begin{aligned}
      N &= 2a\colon   & 0 &= \mu_{n-1}{\mathbf f}_n,\\
      N &= 2a+1\colon & 0 & =\mu_{n-1}{\mathbf f}_n+\mu_{n}{\mathbf f}_{n-1},\\
      N &= 2a+2\colon & 0 & =\mu_{n-1}{\mathbf f}_n+\mu_{n}{\mathbf f}_{n-1}+\mu_{n+1}{\mathbf f}_{n-2},\\
      N &= 2a+3\colon & 0 & =\mu_{n-1}{\mathbf f}_n+\mu_{n}{\mathbf f}_{n-1}+\mu_{n+1}{\mathbf f}_{n-2}+\mu_{n+2}{\mathbf f}_{n-3}.
    \end{aligned}
  \end{equation*}
Here it is important that ${\mathbf f}_i$ are vectors of dimension three. If ${\mathbf f}_n=0$ (a non-generic case), we get $N_0=2a$. Otherwise, if ${\mathbf f}_n, {\mathbf f}_{n-1}$ are linearly dependent (non-generic as well), we get $N_0=2a+1$. Otherwise, if ${\mathbf f}_n, {\mathbf f}_{n-1}, {\mathbf f}_{n-2}$ are linearly dependent (still non-generic), we get $N_0=2a+2$. In the (generic) remaining case we get $N_0=2a+3$. This proves Lemma~\ref{lem:M0-existence}.
\end{itemize}

Above discussion also provides the value of $N_0$. The case $n = 1$ requires additional consideration: As the term $\mu_{n+1}{\mathbf f}_{n-2}$ in the case $N = 2a+2$ does not appear we get $N_0(1)=2a+2$. The second case provides $N_0(n)=2a+3$ for $2 \le n \le 2a$. The first case gives $N_0(n)=2a$ if $2a+1 \le n$. These cases directly translate to the first three lines of \eqref{eq:7}. The remaining two lines belong to polynomial cases and can be seen easily. This concludes the proof of Lemma~\ref{lem:M0}.

In order to study the lowest coefficient of an element of ${\mathcal Q}_{{\mathcal A},\beta}^{m,M}$ let us stress that due to \eqref{eq:17} the $n$-th equation becomes $(-n){\mathbf b}_0=\mu_{n-1}{\mathbf f}_0$. Therefore ${\mathbf r}_m={\mathbf b}_0$ is a multiple of ${\mathbf f}_0={\mathbf F}(\beta)$ and Lemma~\ref{lem:lowCoef} is proven.

If $n = 1$ the critical second equation implies $\mu_0 = \mu_1 = 0$ and the first equation provides ${\mathbf r}_{-1}={\mathbf b}_0 = 0$. Hence we have no rational solution. If $n = 2$ the third critical equation implies $\mu_1 = \mu_2 = \mu_3 = 0$ and we also have $\mu_0 = 0$ by \eqref{eq:17}. This implies ${\mathbf r}_{-2}={\mathbf b}_0 = 0$ and ${\mathbf r}_{-1}={\mathbf b}_1 = 0$. No normalized Laurent PH cut can therefore begin with non-zero coefficient at powers $-2$, $-1$ nor $0$ (due to normalization). On the other hand for $n \ge 3$ the variable $\mu_{n-1}$ can be always set to any non-zero value. In particular the choice $\mu_{n-1}=-n$ leads to ${\mathbf r}_m={\mathbf b}_0={\mathbf f}_0={\mathbf F}(\beta)$ which concludes the proof of Lemma~\ref{lem:qbasis}.
\end{proof}

Let us now turn our attention to the unique open lemma on curves with several roots in the denominator. Somewhat surprisingly, the system of equations for the single root case turns out to be again very useful.

\begin{proof}[Proof of Lemma~\ref{lem:finL}]
  The statement on the Laurent expansion \eqref{eq:14} is clear. We set $\tilde \alpha(t) \coloneqq (t-\beta)^n$. There exists a polynomial $\hat{\alpha}(t)$ such that $\alpha(t) = \tilde \alpha(t) \hat{\alpha}(t)$ and a vector valued polynomial ${\mathbf b}(t)$ such that
  \begin{equation*}
    {\mathbf r}(t) = \frac{-2{\mathbf b}(t)}{\alpha(t)}.
  \end{equation*}
  Moreover, there exists a polynomial $\mu(t)$ so that \eqref{eq:4} is satisfied. We are going to construct a solution $\tilde \mu(t)$, $\tilde {\mathbf b}(t)$ to its specialization \eqref{eq:linear-system} that fulfills the requirements of the lemma.

  There exists a vector-valued polynomial $\tilde {\mathbf b}(t)$ satisfying
  \begin{equation}
    \label{eq:taylor}
    \tilde {\mathbf b}(t)=\frac{{\mathbf b}(t)}{\hat{\alpha}(t)}+\mathcal O(t-\beta)^{(n+1)}.
  \end{equation}
  The above equation uniquely determines the coefficients $\tilde {\mathbf b}_0, \tilde {\mathbf b}_1, \ldots, \tilde {\mathbf b}_n$ of $\tilde {\mathbf b}$ while the remaining coefficients are yet to be chosen. Nonetheless, a straightforward computation already provides
  \begin{equation*}
    \tilde \alpha'\tilde{{\mathbf b}}-\tilde \alpha \tilde{{\mathbf b}}'=\frac{(\tilde \alpha \hat{\alpha})'{\mathbf b}-(\tilde \alpha \hat{\alpha}){\mathbf b}'}{\hat{\alpha}^2}+\mathcal O(t-\beta)^{2n}=\frac{\mu \mathbf{F}}{\hat{\alpha}^2}+\mathcal O(t-\beta)^{2n}=\tilde \mu \mathbf{F}+\mathcal O(t-\beta)^{2n}
  \end{equation*}
  where $\tilde \mu$ is the $(2n-1)$-th Taylor polynomial of $\frac{\mu}{\hat{\alpha}^2}$ at $t = \beta$. Consequently the first $2n$ equations of the system \eqref{eq:linear-system} are satisfied by $\tilde {\mathbf b}$ and $\tilde \mu$. Because these include the critical $2n$-th equation, the solution for the yet undetermined coefficients of $\tilde {\mathbf b}$ can be obtained by setting $\tilde\mu_i=0$ for all indices $i\geq2n$.

  In order to bound the degree of the thus obtained solution, we have to count the number of equations with non-vanishing right-hand side. Our choice for $\tilde\mu_i$, $i \geq 2n$ implies that all equations starting from $k = 2(n+a)$ have vanishing right-hand side so that all coefficients $\tilde {\mathbf b}_i$ with $i > n+2a$ are zero whence $\deg\tilde {\mathbf b} \le n + 2a$.

  With this solution, the Laurent series of $\tilde {\mathbf r}(t) \coloneqq \frac{-2\tilde {\mathbf b}(t)}{\tilde \alpha(t)}$ at $t = \beta$ indeed starts with index $m=-n$ and ends with $M=2a$. Due to \eqref{eq:taylor}, the coefficients of non-positive powers in the Laurent series of ${\mathbf r}$ and $\tilde {\mathbf r}$ at $t = \beta$ coincide.
\end{proof}

\section{Conclusion and Future Work}
\label{sec:conclusion}

Our results in this article give a fairly complete insight into the structure of the solution spaces to Problem~\ref{pr1}. We have explicitly and in detail described the PH case and the generic roots of the denominator. Together with the proofs and remarks however a more general description transpires. Considering the vector space $\mathcal{R}$ of rational solution curves to a general (not necessarily PH) tangent indicatrix $\mathbf F(t)$, we obtain
\begin{equation}
  \label{eq:18}
  \mathcal {R}=\bigoplus_{\beta \in \mathbb C}\left (\bigoplus_{i=-\infty}^{-3} {\mathcal Q}_{\beta}^{i,M_0(i)}\right )
+
  \bigoplus_{\beta \in \mathcal S} {\mathcal Q}_{\beta}^{-2,M_0(-2)}
+
  \bigoplus_{\beta \in \mathcal S_2} {\mathcal Q}_{\beta}^{-1,M_0(-1)}\oplus  \mathcal P
 \end{equation}
where $\mathcal S$ is the set of the roots of $\det[\,{\mathbf F}(t),\mathbf F'(t),\mathbf F''(t)\,]$ and $\mathcal S_2\subset \mathcal S$ is the set of its double-roots. While the first of these sets will be generically finite, the second one will be typically empty.

Specializations of \eqref{eq:18} for bounded degree of the numerator and/or fixed denominator polynomial $\alpha$ are possible and result in canonical basis constructions for several vector spaces of interest. These bases are easy to compute and provide a lot of insight into the respective solution spaces. One aspect is the computationally efficient generalized partial fraction decomposition of Corollary~\ref{cor:PFD}.

Future research will focus on using our basis construction to solve
interpolation problems with rational PH curves \cite{jaklic11,jaklic14}. We
believe that a crucial advantage over existing envelope approaches is the
possibility to avoid unwanted shape properties (cusps) of the interpolant. The
seemingly superfluous polynomial $\mu$ of \eqref{eq:4} plays a crucial role in
that context. There are also some preliminary results that PH curves with the
additional property of having a rational arc length function
\cite{FaroukiSakkalis2019} can be nicely subsumed to our approach. We also hope
that our method can be useful for the construction of PN surfaces. We are able
to compute some examples but it is more difficult to understand the more
difficult structure of the problem.
 
\bibliographystyle{plainnat}

\end{document}